\definecolor{codegreen}{rgb}{0,0.6,0}
\definecolor{codegray}{rgb}{0.5,0.5,0.5}
\definecolor{codepurple}{rgb}{0.58,0,0.82}
\definecolor{backcolour}{rgb}{0.95,0.95,0.92}
\lstdefinestyle{mystyle}{
  backgroundcolor=\color{white},   commentstyle=\color{codegreen},
  keywordstyle=\color{magenta},
  numberstyle=\tiny\color{codegray},
  stringstyle=\color{codepurple},
  basicstyle=\ttfamily\footnotesize,
  breakatwhitespace=false,         
  breaklines=true,                 
  captionpos=b,                    
  keepspaces=true,                 
  numbers=left,                    
  numbersep=5pt,                  
  showspaces=false,                
  showstringspaces=false,
  showtabs=false,                  
  tabsize=2
}
\theoremstyle{plain}
\newtheorem{thm}{Theorem}[section]
\newtheorem{lemma}[thm]{Lemma}
\newtheorem{proposition}[thm]{Proposition}
\newtheorem{prop}[thm]{Proposition}
\newtheorem{cor}[thm]{Corollary}
\newtheorem{claim*}{Claim}
\theoremstyle{definition}
\newtheorem{rmk}[thm]{Remark}
\newtheorem{assumption}{Assumption}
\def\th@plain{%
  \thm@notefont{}% same as heading font
  \itshape % body font
}
\def\th@definition{%
  \thm@notefont{}% same as heading font
  \normalfont % body font
}
\newcommand{\ordnop}{\mathop{\mathrm{ord}}\nolimits}
\newcommand{\ord}{\operatorname{ord}}
\DeclareMathOperator{\Frac}{Frac}
\DeclareMathOperator{\known}{known}
\DeclareMathOperator{\tors}{tors}
\newcommand{\Q}{\mathbb{Q}}
\newcommand{\Z}{\mathbb{Z}}
\newcommand{\F}{\mathbb{F}}
\newcommand{\Zp}{\mathbb{Z}_{p}}
\renewcommand{\div}{\operatorname{div}}
\newcommand{\Log}{\mathop{\mathrm{Log}}\nolimits}
\newcommand{\rank}{\operatorname{rank}}
\newcommand{\Jac}{\operatorname{Jac}}
\renewcommand{\div}{\operatorname{div}}
\DeclareMathOperator{\dR}{\operatorname{dR}}
\DeclareMathOperator{\red}{red}
\begin{document}

\title{Rational points on rank 2 genus 2 bielliptic curves in the LMFDB}

\author{\sc Francesca Bianchi}
\address{Francesca Bianchi \\
Bernoulli Institute for Mathematics, Computer Science and Artificial Intelligence, University of Groningen, Groningen, The Netherlands}
\email{francesca.bianchi@rug.nl}

\author{\sc Oana Padurariu}
\address{Oana Padurariu \\
Dept. of Mathematics \& Statistics\\  
Boston University\\
USA}
\urladdr{https://sites.google.com/view/oana-padurariu/home}
\email{oana@bu.edu}

\date{\today}
\maketitle
\begin{abstract}
Building on work of Balakrishnan, Dogra, and of the first author, we provide some improvements to the explicit quadratic Chabauty method to compute rational points on genus $2$ bielliptic curves over $\Q$, whose Jacobians have Mordell--Weil rank equal to $2$. 
We complement this with a precision analysis to guarantee correct outputs. Together with the Mordell--Weil sieve, this bielliptic quadratic Chabauty method is then the main tool that we use to compute the rational points on the $411$ locally solvable curves from the LMFDB which satisfy the aforementioned conditions.
\end{abstract}

\setcounter{tocdepth}{1}
\tableofcontents

\section{Introduction}
Let $X$ be a smooth, projective, geometrically integral curve over the field of rational numbers; let $g$ be its genus, and let $r$ be the Mordell--Weil rank of its Jacobian $J$ over $\Q$.
In this article we compute the set of rational points on all such $X$ in the LMFDB \cite{lmfdb} - and, in particular, in the database of genus 2 curves computed by Booker, Sijsling, Sutherland, Voight and Yasaki \cite{genus2curvedatabase} - which satisfy the following conditions:
\begin{enumerate}[label=(\roman*)]
\item\label{ass:1} $g=2$;
\item\label{ass:2} $r= 2$;
\item\label{ass:3} $X$ is bielliptic over $\Q$.
\end{enumerate}
For some of these curves, there exists a place $v$ of $\Q$ for which $X(\Q_v) = \emptyset$, so the set of rational points  $X(\Q)$ is trivially empty. After discarding these, we are left with $411$ curves that satisfy \ref{ass:1}--\ref{ass:3} and are everywhere locally solvable. Our main contribution, which made the computation of the rational points on these curves  possible, is an improvement of the explicit quadratic Chabauty approach specific to curves satisfying \ref{ass:1}--\ref{ass:3} and a \texttt{SageMath} \cite{sage} implementation of the resulting method (available at \cite{OurCode}). We now explain how the assumptions \ref{ass:1}--\ref{ass:3} place our problem into the more general context of computing rational points on curves.

First of all, since we are assuming [\ref{ass:1}] that the genus of $X$ is greater than $1$, the set of rational points $X(\Q)$ is finite by Faltings' theorem \cite{Faltings83, faltingserratum}.  Without any further assumption on $X$, there is no practical algorithm that is guaranteed to provably output all the finitely many points in $X(\Q)$. 

If $r=0$, determining $X(\Q)$ is easy; if $r<g$ (and $g$ is small, as in our case), a combination of the Chabauty--Coleman method \cite{Chabauty41, Coleman85} and the Mordell--Weil sieve \cite{Scharaschkin99, BS10} is likely to be successful.
Let $p$ be a prime of good reduction for $X$. The core idea of the Chabauty--Coleman method is that the $p$-adic closure of $J(\Q)$ inside the $p$-adic manifold $J(\Q_p)$ has codimension at least $\max\{g-r,0\}$, and hence positive codimension if $r<g$. Pulling back to $X$ via $X\to J$, $x\mapsto [\deg D \cdot x - D]$ for a $\Q$-rational divisor $D$ of positive degree, this allows one to write down a locally analytic function $\tilde{\rho}_0\colon X(\Q_p)\to \Q_p$ which vanishes on $X(\Q)$. The zero set $A_0$ of $\tilde{\rho}_0$ is finite and contains $X(\Q)$; the Mordell--Weil sieve can then often be used to extract  from $A_0$ the set $X(\Q)$. An implementation of this method for $g=2$ is available in \texttt{Magma} \cite{magma}.

When $r\geq g$, the Chabauty--Coleman method is, in general, not applicable and computing $X(\Q)$ is often a harder problem. Our geometric assumption \ref{ass:3} comes in to simplify the task. Recall that a curve is bielliptic if it has a degree $2$ map to an elliptic curve. We will further require that the map is defined over the base field of the curve. Then a bielliptic genus $2$ curve over $\Q$ admits a model of the form
\begin{equation*}
    y^2 = a_6x^6 +a_4x^4 +a_2 x^2+ a_0,\qquad a_i\in \Z,
\end{equation*}
and its Jacobian is isogenous to $E_1\times E_2$, where $E_1$ and $E_2$ are elliptic curves given by the following Weierstrass equations:
\begin{align*}
E_1\colon y^2 = x^3 + a_4x^2 + a_2a_6x + a_0a_6^2\\
E_2 \colon y^2 = x^3 + a_2x^2 + a_4a_0x + a_6 a_0^2
\end{align*}
(see \cite{FK91, Kuhn88}). If one of $E_1$ and $E_2$ has Mordell--Weil rank $0$ over $\Q$, we can easily compute the set $X(\Q)$; the more interesting case is when the rank of the Jacobian of $X$ is at least $2$ because each of $E_1$ and $E_2$ has positive rank.

For example, $X_w/\Q\colon y^2 = x^6+x^2+1$ is a bielliptic genus $2$ curve, whose elliptic quotients each have rank $1$  (so the rank of the Jacobian of $X_w$ is $2$). It turns out that the determination of $X_w(\Q)$ is equivalent to solving Problem 17 of book VI of Diophantus' Arithmetica. Wetherell \cite{wetherell} observed that, in this case, one could exploit the isogeny $E_1\times E_2 \sim \Jac(X_w)$ to reduce the problem of computing $X_w(\Q)$ to that of computing the rational points on two genus $3$ curves, for which the method of Chabauty--Coleman is applicable. By carrying this out explicitly, he solved Diophantus' problem, many centuries after it had been formulated. 

Flynn and Wetherell \cite{Flynn_wetherell} recast Wetherell's solution to Diophantus' problem as a special case of a strategy that can be applied to compute the rational points on arbitrary bielliptic genus $2$ curves whose corresponding elliptic curve quotients each have rank equal to $1$. Furthermore, they replaced the Chabauty--Coleman computations of Wetherell with computations on elliptic curves over number fields. These have hope of being successful only if the rank of such elliptic curves is strictly less than the number field degree. They called the resulting elliptic curve computations ``elliptic curve Chabauty''; an extension of the genus $2$ bielliptic method of Flynn--Wetherell to curves covering elliptic curves (possibly over some extension of $\Q$) is due to Bruin \cite{Bruin}. 
It would be interesting to investigate for how many of the curves in our database the computation of rational points is algorithmically possible using elliptic curve Chabauty. We have not attempted this, but mention in this respect that Hast \cite{Hast_2_cov} has recently implemented a method to compute rational points on genus $2$ curves with a rational Weierstrass point, which combines descent and elliptic curve Chabauty. The resulting algorithm was run on a large database of curves, 21 of which also belong to our database. 
 For 15 of these 21 the computation was not successful (see \cite{HastRawData}), due to current algorithmic limitations of \texttt{Magma} \cite{magma}, for instance in the computation of Mordell--Weil ranks of elliptic curves over non-trivial extensions of $\Q$. This does not allow one to conclude whether in some of these cases there could be an actual theoretical obstruction to the method.

In a different direction, Kim's program \cite{KimP1, Kimunipotent} aims to construct, for an arbitrary $X/\Q$, locally analytic functions $\tilde{\rho }\colon X(\Q_p) \to \Q_p$ vanishing on the set $X(\Q)$ by replacing the Jacobian in the method of Chabauty--Coleman with suitable Selmer varieties. Balakrishnan and Dogra \cite{BD18} made one level of Kim's program explicit for curves satisfying $r< g+\rho - 1$, where $\rho$ is the rank of the N\'eron--Severi group of $J$ over $\Q$. The resulting method is known as ``quadratic Chabauty'', and the locally analytic function $\tilde{\rho}$ in this case is defined using quadratic forms constructed from $p$-adic heights.
\begin{rmk}\label{rmk:QC_integral}
Here by ``quadratic Chabauty'' we always mean, unless otherwise specified, quadratic Chabauty for rational points. A simpler variant of quadratic Chabauty can be used to determine the integral points of elliptic and hyperelliptic curves whose genus is equal to the rank of the Jacobian, and was developed prior to the work of Balakrishan--Dogra.  See \cite{KimMasseyProducts, AppendixToMasseyProduct, ColemanGrosspAdicSigma} for elliptic curves and \cite{QC0,BBM17} for hyperelliptic curves. 
\end{rmk}
In particular, the quadratic Chabauty method is applicable to curves satisfying \ref{ass:1}--\ref{ass:3}, provided that $E_1$ and $E_2$ each have rank $1$; in fact, these are perhaps the simplest class of curves for which Chabauty's method is not applicable, but quadratic Chabauty is. Not surprisingly, the first explicit examples of quadratic Chabauty in the literature are genus $2$ bielliptic curves\cite[\S\S 8.3,8.4]{BD18}. By now, Balakrishnan--Dogra's quadratic Chabauty has also been successfully applied to many modular curves of arithmetic interest, using the techniques and algorithms of  \cite{BDMTV19, QCmod}. 

The bielliptic genus $2$ case still remains interesting, since it can be understood independently of the $p$-adic Hodge theory techniques that are normally involved in quadratic Chabauty. In this case, the locally analytic function $\tilde{\rho}$ is defined using abelian integrals and $p$-adic heights on the two elliptic curves $E_1$ and $E_2$; in fact, quadratic Chabauty for the \emph{rational} points of $X$ is, essentially, a combination of quadratic Chabauty for the \emph{integral} points of $E_1$ and $E_2$ (see Remark \ref{rmk:QC_integral}). After Balakrishnan--Dogra's first examples of explicit quadratic Chabauty on a bielliptic curve \cite{BD18}, the first author \cite{Bianchi20} made some steps in the method more algorithmic and used this to provide further examples; an extension of this to number fields is given in \cite{QCnfs}.

In this article, we propose a simplification of the quadratic Chabauty function used in the computations of \cite{BD18, Bianchi20, QCnfs}: see Theorem \ref{thm:main} and Remark \ref{rmk:how_does_this_compare}. One of the resulting improvements is that, unlike in \cite{BD18,Bianchi20}, we can work with one function $\tilde{\rho}$ on the whole of $X(\Q_p)$, rather than having to consider different functions on two affine patches covering $X(\Q_p)$. This makes the computations less involved, and some of the algorithmic assumptions of \cite{BD18, Bianchi20} unnecessary. We give an elementary proof that the set of rational points is contained in 
\begin{equation*}
A = \{z\in X(\Q_p) : \tilde{\rho}(z)\in \Omega\},
\end{equation*}
where $\Omega$ is a finite subset of $\Q_p$ which we can describe explicitly in terms of our equations for $X$, $E_1$, $E_2$ and the reduction type of $E_1$ and $E_2$ at the primes of bad reduction. The proof relies on properties of global and local $p$-adic heights on elliptic curves. In particular, we use Mazur--Tate $p$-adic heights \cite{mazur-tate, MT91, MST, harvey}.

As a result, it is possible to run the algorithm on a database containing hundreds of curves, such as ours. To support our computations, we provide a precision analysis. The function $\tilde{\rho}$ is, locally, given by a power series $f(t)\in\Q_p[[t]]$, where $t$ is a local parameter. In order to compute the finite set of $p$-adic points $z$ satisfying $\tilde{\rho}(z)\in \Omega$, we need to have information about the $p$-adic valuation of the coefficients of $f(t)$. We derive lower bounds for these valuations in Section \ref{sec:prec_an}, where we also explain how to use this to compute the set $A$, up to some $p$-adic precision.

Finally, the set $A$ will in general be larger than $X(\Q)$. To complete our determination of the set $X(\Q)$, we apply the Mordell--Weil sieve as in \cite{BBM17} to exclude points in $A$ from belonging to $X(\Q)$, until we are left only with points in $A$ that we can recognise as points in $X(\Q)$. This normally requires computing the set $A$ for more than one prime $p$. See Section \ref{sec:computations} (in particular, \S \ref{subsec:step3}) for a description of a version of the Mordell--Weil sieve that is suitable to our setting, and that is based on \cite{BBM17}. For this step in our computations we use the code of Balakrishnan--Dogra--M\"uller--Tuitman--Vonk, available at \cite{QCMagma}.

For most of the curves in our database, we apply quadratic Chabauty together with the Mordell--Weil sieve to compute $X(\Q)$. For a minority of curves ($59$ of them), one of the two elliptic curves has rank $0$, so we can instead compute the rational points in a much more straightforward way by considering the preimages of the $\Q$-rational points on the rank $0$ elliptic curve quotient. Finally, for two curves, we suspected that $X(\Q) = \emptyset$, and we proved this using a sieve (without a preliminary quadratic Chabauty computation). See Section \ref{sec:computations} for the various steps in our computation. 

In summary, we have the following. Consider the set of genus $2$ curves defined over $\Q$ from \cite{genus2curvedatabase} (available at \cite{lmfdb}) which have points everywhere locally, are bielliptic over $\Q$, and whose Jacobians have rank $2$ over $\Q$. To this set add the quotient of the Shimura curve $X_0(10,19)$ by the Atkin-Lehner involution $w_{190}$ and the curve $y^2 = x^6 +6x^5 +39x^4 +52x^3 +39x^2 +6x+1$ (see Remark \ref{rmk:curves_of_interest} below). Let $\Delta$ be the resulting database of $413$ curves.

\begin{thm}
\label{thm:db}
The number of rational points of each curve in the database $\Delta$ is listed in \cite{RationalPts}.
\end{thm}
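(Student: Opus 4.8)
The plan is to treat Theorem~\ref{thm:db} as the output of a systematic, curve-by-curve computation. For each $X\in\Delta$ I would first fix a bielliptic model $y^2=a_6x^6+a_4x^4+a_2x^2+a_0$, compute the two elliptic quotients $E_1,E_2$ (using the formulas of \cite{FK91, Kuhn88} recalled above), and determine their Mordell--Weil groups together with generators; in the generic case each $E_i$ has rank $1$, so the rank hypothesis \ref{ass:2} is met. For these curves I would then apply the bielliptic quadratic Chabauty method in the form of Theorem~\ref{thm:main} at a suitably chosen prime $p$ of good reduction for $X$, $E_1$ and $E_2$: this produces the explicit finite set
\[
A=\{z\in X(\Q_p):\tilde{\rho}(z)\in\Omega\}\supseteq X(\Q),
\]
where $\Omega\subseteq\Q_p$ is the finite set described in Theorem~\ref{thm:main} in terms of the equations for $X$, $E_1$, $E_2$ and the reduction types of $E_1,E_2$ at their bad primes.

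Since $A$ is only accessible to finite $p$-adic precision, the next step is to invoke the precision analysis of Section~\ref{sec:prec_an}: in each residue disc one writes $\tilde{\rho}$ as a power series $f(t)\in\Q_p[[t]]$, uses the lower bounds on the valuations of its coefficients to control the number of zeros and the precision to which they are determined, and thereby certifies the computed approximation to $A$. Any point of $A$ that visibly lifts to a point of $X(\Q)$ is recorded; every remaining point of $A$ must be shown to be non-rational. For this I would run the Mordell--Weil sieve in the version of \S\ref{subsec:step3}, following \cite{BBM17} and using the code of \cite{QCMagma}: one combines congruence information at a finite collection of auxiliary primes to exclude the spurious points, and, when a single good prime $p$ does not suffice, repeats the quadratic Chabauty computation at a second good prime $p'$ and intersects (via the sieve) the resulting sets. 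Carrying this out for all curves with both $E_i$ of rank $1$ yields $X(\Q)$, hence the point count, for the bulk of the database.

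The remaining curves are handled separately. For the $59$ curves for which one of $E_1,E_2$ has rank $0$, I would compute $X(\Q)$ directly as the (finite) preimage under $X\to E_i$ of the finitely many rational points on the rank-$0$ quotient. For the two curves where $X(\Q)$ is conjecturally empty, I would prove $X(\Q)=\emptyset$ by a stand-alone Mordell--Weil sieve, with no preliminary quadratic Chabauty step. The two curves of independent interest — the quotient of $X_0(10,19)$ by $w_{190}$ and $y^2=x^6+6x^5+39x^4+52x^3+39x^2+6x+1$ — are treated by the same quadratic Chabauty plus sieve procedure as the generic case. Assembling all the resulting counts produces the list in \cite{RationalPts}.

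\textbf{Main obstacle.} The delicate part is not the quadratic Chabauty computation itself — Theorem~\ref{thm:main} makes the set $A$ completely explicit — but rather, first, guaranteeing that the finite-precision computation of $A$ is provably correct, which is exactly what the precision analysis of Section~\ref{sec:prec_an} is designed to secure; and second, making the Mordell--Weil sieve succeed uniformly over hundreds of curves. The sieve requires, for each curve, finding auxiliary primes and a finite-index subgroup of $J(\Q)$ whose images in suitable finite quotients eliminate every extra point of $A$, and this can be slow or require several good primes $p$. Since the sieve's input is the certified set $A$, the two difficulties are intertwined: the precision bounds of Section~\ref{sec:prec_an} are what make the sieve's conclusions rigorous.
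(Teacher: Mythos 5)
Your proposal is correct and follows essentially the same route as the paper: determine bielliptic models and the quotients $E_1, E_2$, split off the $59$ curves where one quotient has rank $0$ and the two curves with (conjecturally) empty $X(\Q)$, apply Theorem~\ref{thm:main} plus the precision analysis of Section~\ref{sec:prec_an} at several good ordinary primes for the remaining curves, and finish with a Mordell--Weil sieve following \cite{BBM17} and \cite{QCMagma}. The only minor divergence is that for the two everywhere-locally-solvable curves with $X(\Q)=\emptyset$, the paper uses a lightweight one-prime sieve that directly exploits the product map $\varphi=(\varphi_1,\varphi_2)$ to $E_1\times E_2$ rather than a general Mordell--Weil sieve, but the underlying idea is the same.
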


\begin{rmk}
For two of the curves in the database $\Delta$, the full set of rational points had been determined prior to our work, and is listed on \cite{lmfdb}. These curves are:
\begin{itemize}
    \item The quotient of the modular curve $X_0(129)$ by the group generated by the Atkin-Lehner involutions $w_3$ and $w_{43}$ (with LMFDB label \href{http://www.lmfdb.org/Genus2Curve/Q/5547/b/16641/1}{5547.b.16641.1}). The rational points for this curve were determined using $2$-cover descent by Bars--Gonz\'alez--Xarles \cite{Qcurves}, as well as using geometric quadratic Chabauty by Edixhoven--Lido \cite{Edixhoven_Lido}.
    \item The quotient of the modular curve $X_0(91)$ by its Fricke involution $w_{91}$ (with LMFDB label \href{http://www.lmfdb.org/Genus2Curve/Q/8281/a/8281/1}{8281.a.8281.1}). The $\Q(i)$-rational points on this curve were computed using quadratic Chabauty over number fields by Balakrishnan--Besser--M\"uller and the first author \cite{QCnfs}.
\end{itemize}
Our computations for these curves confirm the results of \cite{Qcurves, Edixhoven_Lido, QCnfs}.
\end{rmk}

\begin{rmk}\label{rmk:curves_of_interest}
The computation of the rational points of the following curves in $\Delta$ is of interest to prior work:
\begin{itemize}
    \item $X_0(10,19)/\langle w_{190} \rangle$: \cite[\S 3.1]{PS22};
    \item $y^2 = x^6 +6x^5 +39x^4 +52x^3 +39x^2 +6x+1$: \cite[Theorem 1 and Remark following]{LR22};
    \item $X_0(166)^*$ (with LMFDB label \href{https://www.lmfdb.org/Genus2Curve/Q/13778/a/27556/1}{13778.a.27556.1}): \cite[\S 2.4]{ACKP22}.
\end{itemize}
\end{rmk}

This paper is accompanied by our code on GitHub \cite{OurCode, RationalPts}. While the code for bielliptic quadratic Chabauty is an upgrade of the code pertaining to \cite{Bianchi20}, is written in \texttt{SageMath} \cite{sage} and is available at \cite{OurCode}, for the Mordell--Weil sieve computations we make extensive use of the \texttt{Magma} \cite{magma} code available at \cite{QCMagma}; the resulting implementation, as well as the results of our computations, can be found at \cite{RationalPts}.

\subsection*{Acknowledgements} It is a pleasure to thank Jennifer Balakrishnan, C\'eline Maistret and Steffen M\"uller for helpful discussions. We thank Jennifer Balakrishnan for proposing to us this project, based on a suggestion of Andrew Sutherland. We are grateful to Daniel Hast for private communication about \cite{Hast_2_cov}, and to David Roe for answering our questions about the LMFDB. We thank Jennifer Balakrishnan, Barinder Banwait, Raymond van Bommel, and Steffen M\"uller for useful comments on an earlier version of the paper. The first author is supported by an NWO Vidi grant. The second author is supported by NSF grant DMS-1945452 and Simons Foundation grant \#550023. 

\subsection{Notation}
Given a prime $q$, we denote by $\ordnop_q$ the $q$-adic valuation on $\Q_q$, normalised to be surjective onto $\Z$, and by $|\cdot|_q$ the standard absolute value on $\Q_q$, as well as its extension to $\overline{\Q_q}$.

\section{Quadratic Chabauty for genus 2 bielliptic curves}\label{sec:QC}
Let $X/\Q$ be a non-singular genus $2$ curve given by an equation of the form
\begin{equation}\label{eq:X}
X\colon y^2 = a_6x^6 + a_4x^4 + a_2 x^2 + a_0, \qquad a_i\in \Z
\end{equation}
and consider the elliptic curves
\begin{align}
E_1\colon y^2 = x^3 + a_4x^2 + a_2a_6x + a_0a_6^2 \label{eq:E1}\\
E_2 \colon y^2 = x^3 + a_2x^2 + a_4a_0x + a_6 a_0^2. \label{eq:E2}
\end{align}
There are degree $2$ maps $\varphi_i\colon X\to E_i$ given on affine points by
\begin{equation}\label{eq:vaphii}
    \varphi_1(x,y) = (a_6x^2, a_6 y), \qquad \varphi_2(x,y) = (a_0x^{-2}, a_0yx^{-3}).
\end{equation}
We denote by $\infty^{\pm}$ the two points at infinity in $X(\Q(\sqrt{a_6}))$ and by $\infty$ the point at infinity of an elliptic curve. 

Our goal is that of computing $X(\Q)$ under some assumptions on the ranks of $E_1$ and $E_2$. Since $\varphi_i(X(\Q))\subseteq E_i(\Q)$, the task is easy if one of the two elliptic curves is of rank $0$ over $\Q$.
The first interesting case arises when each of $E_1$ and $E_2$ has rank $1$ over $\Q$, and this is precisely the situation that we want to consider here.  So let us assume that 
\begin{equation*}
\rank(E_1(\Q)) = \rank(E_2(\Q)) = 1,
\end{equation*}
and let us fix a prime $p$ of good reduction for the model of $X$ given by \eqref{eq:X} (and hence also for \eqref{eq:E1} and \eqref{eq:E2}).

The strategy comprises two steps: first we compute a finite $p$-adic approximation of $X(\Q)$ inside $X(\Q_p)$ (quadratic Chabauty), and secondly we refine our approximation and extract the set $X(\Q)$ (Mordell--Weil sieve). Up until and including Section \ref{sec:prec_an}, our focus will be on the quadratic Chabauty part of the method, which is due to Balakrishnan--Dogra \cite{BD18}.

Some further examples and algorithmic observations and modifications to quadratic Chabauty for genus $2$ bielliptic curves were part of the first author's article \cite{Bianchi20}. Our starting point is the code \cite{codeQC_FB} provided with the latter article, which uses local $p$-adic height functions on $E_1$ and $E_2$ defined in terms of sigma functions \cite{MT91, MST, harvey} (differently from \cite{BD18}, which uses Coleman--Gross $p$-adic heights \cite{CG89}). 

We will need these to define a non-constant locally analytic function $\tilde{\rho}\colon X(\Q_p)\to \Q_p$ and a finite set $\Omega\subset \Q_p$ such that $\tilde{\rho}(X(\Q))\subseteq \Omega$; we further require that $\tilde{\rho}$ and $\Omega$ are computable, at least up to some desired $p$-adic precision. The set
\begin{equation}
\{z\in X(\Q_p) : \tilde{\rho}(z)\in \Omega\},
\end{equation}
computed to some $p$-adic precision, is our approximation of $X(\Q)$.

The main novelty compared to \cite{BD18, Bianchi20} is that, by keeping track of logarithmic singularities, we are able to work with a simpler function $\tilde{\rho}$: see Remark \ref{rmk:how_does_this_compare} below.

Let us now introduce what we need to define $\tilde{\rho}$ and $\Omega$. For now, we may assume more generally that $p$ is an odd prime. Further conditions on $p$ will be introduced only when needed.

First, we let $\log\colon\Z_p^{\times}\to \Q_p$ be the $p$-adic logarithm. It is possible to extend $\log$ to a function $\Q_p^{\times}\to \Q_p$ by choosing a value for $\log(p)$. It is customary (and natural in our situation \cite[Remark 2.1]{QCnfs}) to choose the Iwasawa branch, namely the one for which $\log(p) = 0$. Some of the summands of $\tilde{\rho}$ depend on this choice, so we fix this choice of branch. Note, however, that overall our $\tilde{\rho}$ will be branch-independent.

 Next, we consider the $p$-adic logarithm on an elliptic curve (we will eventually want to apply this to $E_1$ and $E_2$). Let $E$ be an elliptic curve over $\Q_p$ given by the Weierstrass equation
\begin{equation*}
E\colon y^2 = x^3 + A_2 x^2+A_4x+A_6, \qquad A_i \in \Z_p,
\end{equation*}
with point at infinity $\infty\in E(\Q_p)$.
The $p$-adic logarithm $\Log\colon E(\Q_p)\to \Q_p$ is the abelian group homomorphism defined as follows. 
For $P\in E(\Q_p)$, we let 
\begin{equation*}
\Log(P) = \int_{\infty}^{P} \omega, \qquad \text{where }\omega = \frac{dx}{2y},
\end{equation*}
and the integral is first defined by formal anti-differentiation in the formal group and then extended to $E(\Q_p)$ by insisting that the resulting function be a homomorphism. The map $\Log$ induces a homomorphism $E(\Q_p)\to H^0(E_{\Q_p}, \Omega^1)^{\vee}$, which is the $p$-adic Lie group logarithm \cite[III, \S 7.6]{Bourbaki_Lie}.  By \cite[Theorem 2.8]{Coleman_torsion}, if $E$ has good reduction, $\Log$ coincides with the Coleman integral of $\omega$ between $\infty$ and $P$. We will not use this, but we will use that if $P_1,P_2\in E(\Q_p)$ reduce to the same point modulo $p$, then 
\begin{equation*}
    \Log(P_1) - \Log(P_2) = \int_{P_2}^{P_1} \omega,
\end{equation*}
where the latter integral can also be computed by formal anti-differentiation of a local expansion of $\omega$. Since $\omega$ is holomorphic, $\Log$ is locally analytic, i.e.\ it can be expressed locally by a convergent power series (see \S \ref{subsec:prec_Log} for more details).
Moreover, $\Log$ vanishes at $P\in E(\Q_p)$ if and only if $P\in E(\Q_p)_{\tors}$ (see e.g.\ \cite[IV, Theorem 6.4\thinspace{}(b)]{silverman_AEC}, or, more generally,
\cite[Proposition 3.1]{Coleman_torsion}).
In summary, $\Log\colon E(\Q_p)\to\Q_p$ is a locally analytic group homomorphism, whose kernel is $E(\Q_p)_{\tors}$.

Finally, the most technical ingredient that we need is the theory of $p$-adic heights on elliptic curves. The appearance of $p$-adic heights in the definition of $\tilde{\rho}$ partly justifies the adjective ``quadratic'' in the name of the method. Assume now that our elliptic curve is defined over $\Q$:
\begin{equation*}
E\colon y^2 = x^3 + A_2 x^2+A_4x+A_6, \qquad A_i \in \Z
\end{equation*}
and has good reduction at $p$. Fix a constant $c\in \Q_p$, or, equivalently, a differential $\eta$ of the form $\eta = (x+c)\frac{dx}{2y}$. Note that the class of $\eta$ spans a one-dimensional subspace of $H^1_{\dR}(E/\Q_p)$ complementary to the space of holomorphic forms, and conversely every such subspace is spanned by the class of a differential of that form. 

There are several theories of $p$-adic heights in the literature, although many comparison results are now known. Here we use the same height as in \cite{Bianchi20}.  This is essentially the one of Mazur--Stein--Tate\cite{MST} (at least when the reduction is ordinary and we pick the above subspace to be the unit root eigenspace of Frobenius, as we will do in Section \ref{sec:prec_an}), but we further consider its decomposition into a sum of $p$-adic local N\'eron functions at every prime $q$
\begin{equation*}
\lambda_q\colon E(\Q_q)\setminus\{\infty\}\to \Q_p
\end{equation*}
as in \cite[\S 2A, 4A]{Bianchi20}. At the primes different from our working prime $p$, these are obtained from real-valued N\'eron functions with respect to the divisor $2(\infty)$ \cite[Chapter VI]{silvermanadvancedtopics} by replacing the real logarithm with the $p$-adic one, and therefore satisfy similar properties. 
At the prime $p$, the local height $\lambda_p$ depends on the choice of $c$; in the formal group of $E$ at $p$, it also depends on the branch of the $p$-adic logarithm.

The properties of these local height functions that we need for the main theorem are listed in Propositions \ref{prop:prop_lambdap} and \ref{prop:prop_lambdaq}, and more details on $\lambda_p$ are also provided in the precision analysis (\S \ref{subsec:prec_lambda}). Roughly, the local function $\lambda_q$ is well-behaved on the subset of $E(\Q_q)$ consisting of points with coordinates in $\Z_q$, where by ``well-behaved'' we mean that it has finite image if $q\neq p$ and is locally analytic if $q=p$. In the subset of $E(\Q_q)$ consisting of points reducing to $\infty$, the function $\lambda_q$ has a logarithmic term if $q=p$ and takes infinitely many values if $q\neq p$.  When we consider the problem of computing $X(\Q)$, we are able to control this unboundedness by using both maps $\varphi_1$ and $\varphi_2$ defined in \eqref{eq:vaphii} (see Theorem \ref{thm:main}\thinspace{}\ref{thm:main_b}).

\begin{prop}\label{prop:prop_lambdap}
\leavevmode
\begin{enumerate}[label=(\alph*)]
    \item\label{prop:prop_lambdap_loc_an} The local N\'eron function $\lambda_p$ is locally analytic away from the residue disc of the point at infinity.
    \item\label{prop:prop_lambdap_log_term} Let $t=-\frac{x}{y}$. Then, the expansion of $\lambda_p$ in the disc of $\infty$ in terms of $t$ is of the form $-2\log(t)+ O(t)$. 
    \end{enumerate}
    \end{prop}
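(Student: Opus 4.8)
My plan is to deduce both parts from the description of $\lambda_p$ in terms of the $p$-adic sigma function $\sigma$ of $E$ (cf.\ \cite{MST, harvey, Bianchi20}). Recall that to $E$, the choice of $c$ (equivalently $\eta$), and the chosen branch of $\log$ one attaches a sigma function $\sigma(t)\in t+t^{2}\Zp[[t]]$, and that on the residue disc of $\infty$ — which coincides with the formal group $\hat{E}(p\Zp)$, with parameter $t=-x/y$ — one has $\lambda_p=-2\log\sigma(t)$ up to an explicitly computable additive analytic term; the coefficient $-2$ reflects that $\lambda_p$ is the local height with respect to the divisor $2(\infty)$. The first step is to pin this identification down by tracing through the normalisations of \cite{Bianchi20, MST}.

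Granting it, part \ref{prop:prop_lambdap_log_term} is immediate: writing $\sigma(t)=t\,(1+u(t))$ with $u(t)\in t^{2}\Zp[[t]]$, we have $u(t)\in p^{2}\Zp$ for every $t\in p\Zp$, so the series $\log(1+u(t))=\sum_{n\ge 1}(-1)^{n-1}u(t)^{n}/n$ converges on the residue disc of $\infty$ and lies in $t^{2}\Zp[[t]]$; hence $\lambda_p=-2\log(t)-2\log(1+u(t))+(\text{analytic})=-2\log(t)+O(t)$ there.

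For part \ref{prop:prop_lambdap_loc_an}, let $D$ be a residue disc of $E(\Qp)$ other than that of $\infty$, so that every point of $D$ has coordinates in $\Zp$, and fix a positive integer $N$ — say $N=\#E(\Fp)$ — for which multiplication by $N$ carries $E(\Qp)$ into $\hat{E}(p\Zp)$. From the distribution relation for the $p$-adic sigma function, $\sigma(t([N]P))=\pm\,\sigma(t(P))^{N^{2}}\psi_{N}(P)$ with $\psi_{N}\in\Z[x,y]$ the $N$-th division polynomial, together with the formula for $\lambda_p$ on the formal group, one obtains the familiar quasi-quadratic functional equation
\begin{equation*}
\lambda_p(P)=\tfrac{1}{N^{2}}\bigl(\lambda_p([N]P)+\log\psi_{N}(P)^{2}\bigr),\qquad [N]P\neq\infty,
\end{equation*}
which is also how $\lambda_p$ is defined outside the formal group in the references. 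Fix now a local coordinate $s$ on $D$. Then $P\mapsto\psi_{N}(P)$ and $P\mapsto t([N]P)$ ($t([N]P)$ being a rational function of $x(P),y(P)$, with values in $p\Zp$ since $[N]P$ lies in the formal group), and hence also $P\mapsto\sigma(t([N]P))$, are analytic functions of $s$ on $D$; their only zeros on $D$, if any, are at the finitely many points of $D\cap E[N]$, and those are simple: $\psi_{N}$ has a simple zero at each non-trivial $N$-torsion point, and, since $[N]\colon E\to E$ is \'etale while $t=-x/y$ has a simple zero at $\infty$, the pullback $t\circ[N]$ — and hence $\sigma(t([N]P))$, since $\sigma(u)=u(1+\cdots)$ with the second factor a unit on $p\Zp$ — has a simple zero at each such point $P_{0}$. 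Writing $s_{0}$ for the value of $s$ at $P_{0}$, the term $\lambda_p([N]P)=-2\log\sigma(t([N]P))+(\text{analytic})$ contributes $-2\log(s-s_{0})$ near $P_{0}$, and this is cancelled by the $+2\log(s-s_{0})$ coming from $\log\psi_{N}(P)^{2}$; thus $\lambda_p|_{D}$ is analytic near $P_{0}$, and it is plainly analytic away from $D\cap E[N]$, so $\lambda_p$ is locally analytic on $D$, which is \ref{prop:prop_lambdap_loc_an}.

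The routine part is the power-series bookkeeping; the points that need genuine care are (i) pinning down the normalisation that identifies $\lambda_p$ with $-2\log\sigma(t)$ (up to an analytic term) on the formal group, in particular the coefficient $-2$ which is what forces the leading behaviour in \ref{prop:prop_lambdap_log_term}; (ii) having the $[N]$-distribution relation for the $p$-adic $\sigma$ available, equivalently the functional equation of $\lambda_p$ under $[N]$; and (iii) — the heart of \ref{prop:prop_lambdap_loc_an} — the cancellation of the two logarithmic singularities at the $N$-torsion points lying in $D$. A mild subtlety is that, since $N$ is a multiple of the order of the reduction of $D$, the division polynomial $\psi_{N}$ takes $p$-divisible values on all of $D$, so one must be a little careful with $p$-adic valuations and with the (Iwasawa) branch of $\log$ to pass from local analyticity to a single power series on the whole of $D$; I would deal with this by working on sub-discs — which already yields the stated conclusion — or, for the stronger statement, by appealing to the comparison of $\lambda_p$ with the Coleman--Gross local height \cite{CG89}, for which analyticity on residue discs is immediate from Coleman's theory.
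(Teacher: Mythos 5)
Your proof is correct and follows essentially the same approach as the paper, which refers to \cite{Bianchi20} and to \S 3.3, where $\lambda_p$ is defined exactly by $\lambda_p(P)=-2\log\sigma_p(P)$ on the formal group and extended by $\lambda_p(P)=-\frac{2}{m^2}\log\bigl(\sigma_p(mP)/\phi_m(P)\bigr)$ otherwise; this is the functional equation you invoke. The one substantive difference is in how part (a) is organised: the paper's Lemma 3.4 shows directly that the \emph{argument} of the logarithm, $\sigma_p(mP(T))/\phi_m(P(T))$, is a unit power series in $\Z_p[[T]]$ (so $\lambda_p$ is given by a single power series on the whole residue disc, which is then used in Corollary 3.5), whereas you take logs of numerator and denominator separately and show the logarithmic singularities at the $N$-torsion points cancel. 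Both arguments are correct and morally equivalent; the paper's version is slightly stronger (analytic on each disc, not just locally) and feeds directly into the precision bounds, while yours gives a cleaner conceptual picture of \emph{why} the singularities disappear. One small slip: since $\sigma_p(t)=t+O(t^2)$, writing $\sigma_p(t)=t(1+u(t))$ gives $u(t)\in t\Z_p[[t]]$, not $t^2\Z_p[[t]]$ (the Mazur--Tate sigma function is not odd in general); this does not affect your conclusion, since $\log(1+u(t))=O(t)$ is all that is needed for \ref{prop:prop_lambdap_log_term}. Also, on the formal group the paper's $\lambda_p$ is $-2\log\sigma_p$ exactly, with no additive analytic correction, so your hedge there is unnecessary. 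Finally, your closing concern about branch dependence on $D$ is resolved by the paper's observation that $\lambda_p(P)$ for $P$ outside the formal group is branch-independent.
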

    \begin{proof}
    See \cite{Bianchi20} and \S \ref{subsec:prec_lambda}.
    \end{proof}
    \begin{prop}\label{prop:prop_lambdaq} Let $q\neq p$.
    \begin{enumerate}[label=(\alph*)]
    \item\label{prop:prop_lambdaq_nonsign} If $P\in E(\Q_q)$ reduces to a non-singular point modulo $q$, then 
    \begin{equation*}
        \lambda_q(P) = \log(\max\{1,|x(P)|_q\}).
    \end{equation*}
    \item \label{prop:prop_lambdaq_int}Let $W_q^E$ be the set of values attained by $\lambda_q$ on points in $E(\Q_q)$ of the form $(x,y)$ with $x,y\in \Z_q$. Then $W_q^E$ is finite, explicitly computable, and $\{0\}$ for all but finitely many $q$ (in particular, $W_q^E\subseteq \{0\}$ at all primes of good reduction for the given model for $E$).
    \end{enumerate}
\end{prop}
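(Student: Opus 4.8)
The plan is to reduce everything to standard facts about real-valued local N\'eron functions and then transport them to the $p$-adic setting by the substitution ``real $\log \rightsquigarrow$ $p$-adic $\log$''. For part \ref{prop:prop_lambdaq_nonsign}, I would start from the defining properties of the N\'eron function $\lambda_q^{\mathrm{real}}$ attached to the divisor $2(\infty)$ on $E$ over $\Q_q$, as recorded in \cite[Chapter VI]{silvermanadvancedtopics}: it is the unique (up to the usual normalisation) function $E(\Q_q)\setminus\{\infty\}\to\R$ that is continuous, has the right quasi-parallelogram law, and has the prescribed $-\log|t|_q$-type singularity at $\infty$. On the subgroup $E_0(\Q_q)$ of points with non-singular reduction, the formula $\lambda_q^{\mathrm{real}}(P)=\tfrac12\log\max\{1,|x(P)|_q\}$ (with the normalisation making it $\tfrac12$ of the naive local height) is classical; in our normalisation, where $\lambda_q$ is attached to $2(\infty)$ and matches the conventions of \cite[\S 2A, 4A]{Bianchi20}, this becomes $\log\max\{1,|x(P)|_q\}$. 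Since $P$ reduces to a non-singular point modulo $q$, we have $P\in E_0(\Q_q)$, so this formula applies; replacing the real logarithm by the $p$-adic one and invoking the fact (from \cite{Bianchi20}) that $\lambda_q$ is obtained from $\lambda_q^{\mathrm{real}}$ by precisely this substitution gives the claimed identity. The only subtlety is bookkeeping of the normalisation constant, which I would pin down once and for all by comparing with the reference.

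For part \ref{prop:prop_lambdaq_int}, I would argue as follows. Points $(x,y)\in E(\Q_q)$ with $x,y\in\Z_q$ have $|x|_q\le 1$, so if such a point additionally has non-singular reduction then part \ref{prop:prop_lambdaq_nonsign} gives $\lambda_q(P)=\log\max\{1,|x(P)|_q\}=\log 1 = 0$. Hence any nonzero value in $W_q^E$ can only come from points with $x,y\in\Z_q$ that reduce to the singular point of $\tilde{E}$ modulo $q$ — in particular $q$ must be a prime of bad reduction for the given model, of which there are only finitely many, which already yields the ``$\{0\}$ for all but finitely many $q$'' and the parenthetical statement about good-reduction primes. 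For the finitely many bad primes, finiteness and explicit computability of $W_q^E$ follow from the fact that $\lambda_q$ on the points reducing to the singular point is determined by the N\'eron model / component group data: the N\'eron function takes finitely many values on each non-identity component coset when restricted to points with integral coordinates, and these values are given by the standard Tate/N\'eron local formulas in terms of the valuations of the coordinates and the Kodaira type (equivalently, by the rational ``$\mu$-invariants'' $\lambda_q(P) = -\tfrac12 B(n,n)\log q$-type expressions from the component group pairing). I would cite the explicit tables in \cite{silvermanadvancedtopics} (and the discussion in \cite{Bianchi20, BBM17}) for these, and note that ``$x,y\in\Z_q$'' bounds the valuations that can occur, hence bounds the (finite) set of component cosets that are hit.

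The main obstacle I anticipate is not conceptual but notational: making sure the normalisation of $\lambda_q$ used here — attached to $2(\infty)$, matching \cite[\S 2A, 4A]{Bianchi20}, and with the $p$-adic $\log$ replacing the real one — is consistent across part \ref{prop:prop_lambdap} (where the singularity is $-2\log(t)+O(t)$, i.e.\ twice a ``$-\log t$'') and part \ref{prop:prop_lambdaq}. Once that constant is fixed, part \ref{prop:prop_lambdaq_nonsign} is a one-line translation of a classical formula, and part \ref{prop:prop_lambdaq_int} is a finiteness statement that reduces to the well-understood behaviour of N\'eron functions on the finitely many non-identity components at bad primes; the explicit computability is then just a matter of running the known local formulas, which is exactly what the accompanying code does.
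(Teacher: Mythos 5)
Your argument is correct and tracks the paper's own proof, which simply cites \cite[Lemma 2.1]{Bianchi20} for part \ref{prop:prop_lambdaq_nonsign} and \cite[Lemma 6.4]{Bianchi20} for part \ref{prop:prop_lambdaq_int}; what you wrote is a reasonable unpacking of the content of those two lemmas (transport the classical real N\'eron-function formula on points of non-singular reduction to the $p$-adic setting, then observe that non-trivial contributions to $W_q^E$ come only from points with integral coordinates reducing to the singular point, for which the component-group data at the finitely many bad primes gives finitely many, explicitly computable values). The normalisation bookkeeping you flag is genuinely the only point of care, and the factor-of-two check against the $-2\log t + O(t)$ expansion in Proposition \ref{prop:prop_lambdap}\thinspace{}\ref{prop:prop_lambdap_log_term} is the right way to pin it down.
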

\begin{proof}
For part (a), see \cite[Lemma 2.1]{Bianchi20}.  For part (b), see \cite[Lemma 6.4]{Bianchi20}, which as stated seems to be specific to $E_i$, but holds more generally. It provides an explicit description of $W_q^E$. 
\end{proof}

For a global point $P\in E(\Q)$, the global $p$-adic height is then 
\begin{equation}\label{eq:hp}
h_p(P) =\begin{cases}
\sum_{q} \lambda_q(P) \quad & \text{if } P\neq \infty,\\
0 \quad & \text{otherwise.}
\end{cases}
\end{equation}
By Proposition \ref{prop:prop_lambdaq}, this is well-defined, as all but finitely many of the $\lambda_q(P)$ are equal to $0$, for a given $P$. The crucial property satisfied by $h_p$ that we need is the following:
\begin{equation}\label{eq:hp_quad}
h_p(mP) = m^2 h_p(P), \qquad \text{for all } m\in \Z, P\in E(\Q).
\end{equation}
We are now ready to introduce the quadratic Chabauty function $\tilde{\rho}$ mentioned above, and the set $\Omega$.  See Remark \ref{rmk:how_does_this_compare} for an explanation of how this differs from the explicit quadratic Chabauty function used in \cite{BD18, Bianchi20} (and in the generalisation to number fields of \cite{QCnfs}).

For a prime $q$, let 
\begin{equation*}
Z_q = X(\Q_q) \setminus \{P: x(P)\in \{0,\infty\}\}.
\end{equation*}

\begin{thm}\label{thm:main}
Suppose that each of $E_1$ and $E_2$ has rank $1$ over $\Q$, and let $p$ be a prime of good reduction for the equation \eqref{eq:X} for $X$. For each $i\in\{1,2\}$, fix a choice of subspace of $H^1_{\dR}(E_i/\Q_p)$ complementary to the space of holomorphic forms, and consider the corresponding global height $h_p$ and local N\'eron functions $\lambda_q$, at every $q$. Let $P_i\in E_i(\Q)$ be a point of infinite order and let
\begin{equation*}
\alpha_i = \frac{h_p(P_i)}{\Log^2(P_i)}.
\end{equation*}
Then:
\begin{enumerate}[label=(\alph*)]
\item \label{thm:main_a} The constant $\alpha_i$ is independent of the choice of $P_i$.
\item \label{thm:main_b} The function $\rho\colon Z_p\to \Q_p$ given by
\begin{equation*}
\rho(z) =     \lambda_p(\varphi_1(z)) - \lambda_p(\varphi_2(z)) - 2\log(x(z)) - \alpha_1 \Log^2(\varphi_1(z)) + \alpha_2\Log^2(\varphi_2(z)) 
\end{equation*}
can be continued to a locally analytic function $\tilde{\rho}\colon X(\Q_p)\to \Q_p$.
\item  For a prime $q\neq p$, let
\begin{align*}
    \Omega_q = (-W_q^{E_1} + W_q^{E_2} + \{-n\log q : -\ordnop_q(a_6) \leq n \leq \ordnop_q(a_0) \text{ and } n \equiv 0 \bmod{2}\})\\
    \cup (\log|a_0|_q-W_q^{E_1} ) \cup (- \log|a_6|_q + W_q^{E_2}).
\end{align*}
and set
\begin{align*}
\Omega = \biggl\{\sum_{q\: \text{bad}} w_q:w_q\in  \Omega_q \biggr\},
\end{align*}
where the sum runs over all primes at which $X$ has bad reduction. Then $\Omega$ is finite and 
\begin{equation*}
\tilde{\rho}(X(\Q)) \subseteq \Omega.
\end{equation*}
\end{enumerate}
\end{thm}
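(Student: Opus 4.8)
The plan is to establish the three parts in order, treating (c) in two cases: the generic rational points lying in $Z_p$, and the finitely many ``boundary'' rational points with $x$-coordinate $0$ or $\infty$.

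\emph{Part \ref{thm:main_a}.} I would note that both $h_p$ and $Q\mapsto\Log^2(Q)$ are $\Q_p$-valued quadratic forms on the finitely generated group $E_i(\Q)$ which vanish on $E_i(\Q)_{\tors}$: for $h_p$ this follows from $h_p(\infty)=0$ and \eqref{eq:hp_quad}, and for $\Log^2$ from the fact, recalled in Section~\ref{sec:QC}, that $\Log$ is a homomorphism with kernel exactly $E_i(\Q)_{\tors}$. Since $\rank E_i(\Q)=1$, any two quadratic forms on the rank-one lattice $E_i(\Q)/E_i(\Q)_{\tors}$ are proportional, and $\Log^2$ is nonzero on a point of infinite order, so $\alpha_i=h_p(P_i)/\Log^2(P_i)$ is independent of $P_i$; moreover $h_p(Q)=\alpha_i\Log^2(Q)$ for \emph{every} $Q\in E_i(\Q)$ (both sides vanish on torsion and at $\infty$), which is the form I will use below.

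\emph{Part \ref{thm:main_b}.} Each summand of $\rho$ is locally analytic on $Z_p$: the $\Log^2\circ\varphi_j$ are locally analytic on all of $X(\Q_p)$, $\log\circ x$ is locally analytic on $Z_p$ since $x$ takes values in $\Q_p^{\times}$ there, and $\lambda_p\circ\varphi_j$ is locally analytic on $Z_p$ by Proposition~\ref{prop:prop_lambdap}\,\ref{prop:prop_lambdap_loc_an} away from the discs where $\varphi_j(z)$ reduces to $\infty$ and, on those, by Proposition~\ref{prop:prop_lambdap}\,\ref{prop:prop_lambdap_log_term} together with the fact that the parameter occurring there is a nonvanishing locally analytic function of $z$. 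The only residue discs of $X(\Q_p)$ on which $\rho$ is not yet defined are those of the (at most four) points with $x\in\{0,\infty\}$. On the disc of a point with $x=0$, $\varphi_1$ avoids the disc of $\infty_{E_1}$, while $\varphi_2$ maps into that of $\infty_{E_2}$; writing $t=-x_{E_2}/y_{E_2}=-x/y$, Proposition~\ref{prop:prop_lambdap}\,\ref{prop:prop_lambdap_log_term} and $\log(-1)=0$ (valid as $p$ is odd) give $\lambda_p(\varphi_2(z))=-2\log x(z)+2\log y(z)+O(t)$, so that
\[
-\lambda_p(\varphi_2(z))-2\log x(z)=-2\log y(z)+O(t)
\]
extends analytically across the disc (here $y$ is a $p$-adic unit since $p\nmid a_0$: good reduction of \eqref{eq:X} at a prime forces it to not divide $a_6a_0$). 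The disc of a point with $x=\infty$ is symmetric, via $t=-x_{E_1}/y_{E_1}=-x^2/y$, so that $\lambda_p(\varphi_1(z))-2\log x(z)=2\log(y/x^3)+O(t)$ extends. As $Z_p$ is dense in $X(\Q_p)$, the continuation $\tilde\rho$ is unique.

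\emph{Part (c), points of $Z_p$.} For $z\in X(\Q)$ with $x(z)\notin\{0,\infty\}$ I would substitute $\alpha_j\Log^2(\varphi_j(z))=h_p(\varphi_j(z))=\sum_q\lambda_q(\varphi_j(z))$ (part \ref{thm:main_a} and \eqref{eq:hp}) and $\log x(z)=\sum_{q\neq p}\ordnop_q(x(z))\log q$ (Iwasawa branch); the $\lambda_p$-terms cancel, leaving
\[
\tilde\rho(z)=\sum_{q\neq p}\Big(-\lambda_q(\varphi_1(z))+\lambda_q(\varphi_2(z))-2\,\ordnop_q(x(z))\log q\Big).
\]
Now fix $q\neq p$. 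As $\varphi_1(z)$ has $q$-integral coordinates iff $2\ordnop_q(x(z))\ge -\ordnop_q(a_6)$ and $\varphi_2(z)$ does iff $2\ordnop_q(x(z))\le\ordnop_q(a_0)$, and $-\ordnop_q(a_6)\le0\le\ordnop_q(a_0)$, at least one of the two is $q$-integral. If both are, then $n:=2\ordnop_q(x(z))$ is even with $-\ordnop_q(a_6)\le n\le\ordnop_q(a_0)$, each $\lambda_q(\varphi_j(z))\in W_q^{E_j}$, and the $q$-summand lies in $-W_q^{E_1}+W_q^{E_2}+\{-n\log q\}$. If only $\varphi_1(z)$ is $q$-integral, then $\varphi_2(z)$ reduces to the non-singular point $\infty$, so Proposition~\ref{prop:prop_lambdaq}\,\ref{prop:prop_lambdaq_nonsign} gives $\lambda_q(\varphi_2(z))=(2\ordnop_q(x(z))-\ordnop_q(a_0))\log q$ and the $q$-summand collapses to $\log|a_0|_q-\lambda_q(\varphi_1(z))\in\log|a_0|_q-W_q^{E_1}$; the case where only $\varphi_2(z)$ is $q$-integral is symmetric, yielding an element of $-\log|a_6|_q+W_q^{E_2}$. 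So the $q$-summand always lies in $\Omega_q$, and it vanishes at good $q\neq p$ (there $q\nmid a_6a_0$ and $W_q^{E_j}=\{0\}$ by Proposition~\ref{prop:prop_lambdaq}\,\ref{prop:prop_lambdaq_int}, since good reduction of \eqref{eq:X} implies good reduction of \eqref{eq:E1}, \eqref{eq:E2}). As $p$ is not a bad prime, summing over bad primes yields $\tilde\rho(z)\in\Omega$; $\Omega$ is finite because each $W_q^{E_j}$ is.

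\emph{Part (c), boundary points, and the main difficulty.} For $z\in X(\Q)$ with $x(z)=0$ (so $a_0$ is a square), $\varphi_1(z)$ is a rational point of $E_1$ with integral coordinates and $\varphi_2(z)=\infty_{E_2}$; evaluating $\tilde\rho(z)$ as the limit of $\rho$ along the disc using the expansion from part \ref{thm:main_b}, the term $\alpha_2\Log^2(\varphi_2(z'))$ tends to $0$, the quantity $-\lambda_p(\varphi_2(z'))-2\log x(z')$ tends to $-2\log\sqrt{a_0}=-\log a_0$, and $-\alpha_1\Log^2(\varphi_1(z))=-h_p(\varphi_1(z))$, whence $\tilde\rho(z)=\lambda_p(\varphi_1(z))-h_p(\varphi_1(z))-\log a_0=\sum_{q\neq p}(\log|a_0|_q-\lambda_q(\varphi_1(z)))\in\Omega$; the case $x(z)=\infty$ is symmetric, giving $\tilde\rho(z)=\sum_{q\neq p}(-\log|a_6|_q+\lambda_q(\varphi_2(z)))\in\Omega$. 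I expect the main difficulty to be the prime-by-prime analysis in the generic case: bookkeeping which of $\varphi_1(z),\varphi_2(z)$ is $q$-integral (and that at least one always is), applying Proposition~\ref{prop:prop_lambdaq}\,\ref{prop:prop_lambdaq_nonsign} to the point reducing to $\infty$ with the correct power of $q$, and verifying that primes of good reduction for the genus-$2$ model \eqref{eq:X} contribute nothing — which is where the argument rests on relating good reduction of \eqref{eq:X} to $q\nmid a_6a_0$ and thence to good reduction of $E_1$ and $E_2$.
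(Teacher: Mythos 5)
Your proof is correct and follows essentially the same route as the paper's: part (a) via the quadraticity of both $h_p$ and $\Log^2$ on a rank-one group, part (b) via the cancellation of logarithmic terms in the discs over $x\in\{0,\infty\}$, and part (c) via the decomposition $h_p=\sum_q\lambda_q$ combined with a $q$-by-$q$ case analysis according to which of $\varphi_1(z),\varphi_2(z)$ is $q$-integral. Your version is slightly more explicit in a couple of places (writing out $t=-x/y$, resp.\ $t=-x^2/y$, in the boundary discs and using $\log(-1)=0$; recording $\log x(z)=\sum_{q\neq p}\ordnop_q(x(z))\log q$), but these are expository differences rather than a different argument.
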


\begin{rmk}
We explain the notation in Part (c) of the theorem. The finite sets $W_q^{E_i}\subset \Q_p$ are defined in Proposition \ref{prop:prop_lambdaq}. Given $A,B\subset\Q_p$ and $a\in \Q_p$, we write:
\begin{equation*}
A+B = \{a+b:a\in A, b\in B\}, \qquad -A = \{-a:a\in A\}, \qquad a+B = \{a\}+B.
\end{equation*}
\end{rmk}

\begin{rmk}\label{rmk:how_does_this_compare}
By Proposition \ref{prop:prop_lambdap}\thinspace{}\ref{prop:prop_lambdap_log_term}, $\lambda_p(\varphi_1(z))$ has a logarithmic term around points at infinity, $\lambda_p(\varphi_2(z))$ has a logarithmic term around points with zero $x$-coordinate. The term $\log(x(z))$ also has a logarithmic term around each of these points. To make up for this, in \cite[Corollary 8.1]{BD18} and \cite [Proposition 6.5]{Bianchi20} one considered two different affine patches covering $X(\Q_p)$, and correspondingly applied suitable translations on the elliptic curves to move away from these problematic discs. What we propose to do here instead is to discard the logarithmic terms at once, since these overall cancel out. 

Moreover, unlike in \cite[Corollary 8.1]{BD18} and \cite [Proposition 6.5]{Bianchi20}, we do not assume that $a_6 = 1$. Similarly to \cite{Bianchi20}, we give here an elementary proof of the resulting quadratic Chabauty criterion, which does not require any $p$-adic Hodge theory. As a result, we obtain the explicit description of $\Omega$ provided in the theorem statement. Using results in \cite{BD18}, a smaller $\Omega$ may sometimes be chosen if there are some primes $q$ at which $X$ has bad but potentially good reduction: see Proposion \ref{prop:potential} below.
\end{rmk}

\begin{proof}[Proof of Theorem \ref{thm:main}]
\item[(a)]  First note that $\alpha_i$ is well-defined since $\Log$ vanishes on torsion points only. The independence of $\alpha_i$ on the choice of $P_i$ is a standard argument in quadratic Chabauty methods. Namely, first note that since $\Log$ is a homomorphism, its square satisfies
\begin{equation*}
\Log^2(mP) = m^2\Log^2(P) \qquad \text{for all } m\in\Z\quad \text{and}\quad P\in E(\Q).
\end{equation*}
Since $\Log^2$ is also non-degenerate and $\rank(E_i(\Q)) = 1$, any other function $E(\Q)\to\Q_p$ that transforms quadratically with respect to multiplication by $m$ on the elliptic curve must be a scalar multiple of $\Log^2$. This is in particular the case for $h_p$, in view of \eqref{eq:hp_quad}.

\item[(b)] Each term in $\rho$ is locally analytic, with the following exceptions: the function $\lambda_p(\varphi_1(z))$ has a logarithmic term in residue discs of points with $x(z) = \infty$,  the function $\lambda_p(\varphi_2(z))$ has a logarithmic term in residue discs of points with $x(z) = 0$, and $\log(x(z))$ has a logarithmic term in residue discs of points with $x(z) \in\{0,\infty\}$. Using Proposition \ref{prop:prop_lambdap}\thinspace{}\ref{prop:prop_lambdap_log_term}, we see that, overall, the logarithmic terms add up to $0$. For more details, see the proof of part (c) or Section \ref{sec:prec_an}.
\item[(c)] First note that $\Omega$ is finite, since $\Omega_q$ is finite and there are finitely many primes of bad reduction for $X$. Also note that
\begin{equation*}
   \Omega \supseteq \Omega^{\prime} \colonequals  \biggl\{\sum_{q\neq p} w_q:w_q\in  \Omega_q \biggr\},
\end{equation*}
since $\Omega_q\subseteq \{0\}$ if $q$ is a prime of good reduction (by Proposition \ref{prop:prop_lambdaq}\thinspace{}\ref{prop:prop_lambdaq_int}, the fact that the primes dividing $a_0$ or $a_6$ are of bad reduction for the given equation for $X$, and that the primes of good reduction for $X$ are of good reduction for the equations \eqref{eq:E1} and \eqref{eq:E2} as well). Therefore, it suffices to show that $\tilde{\rho}(X(\Q))\subseteq \Omega^{\prime}$. 

Let us first assume that $z\in X(\Q)\cap Z_p$. Then, by \eqref{eq:hp} and part \ref{thm:main_a},
\begin{equation*}
   \tilde{\rho}(z) =  \rho(z) = 
    \sum_{q\neq p}\left(-\lambda_q(\varphi_1(z)) + \lambda_q(\varphi_2(z))+2\log |x(z)|_q\right). \end{equation*}
    For $z_q\in  Z_q$, define
    \begin{equation*}
        w_q(z_q) = -\lambda_q(\varphi_1(z_q)) + \lambda_q(\varphi_2(z_q))+2\log |x(z_q)|_q.
    \end{equation*}
    Thus, for $z$ as above, we have
    \begin{equation*}
       \rho(z)\in \Omega^{\prime\prime} \colonequals \biggl\{ \sum_{q\neq p} w_q(z_q) : (z_q) \in \prod_{q\neq p}  Z_q\biggr\}.
    \end{equation*}
    The following case distinction (which uses Proposition \ref{prop:prop_lambdaq}) shows that $\Omega^{\prime\prime}\subseteq \Omega^{\prime}$:   \begin{enumerate}
        \item If $-\ordnop_q(a_6)\leq 2\ordnop_q(x(z_q))\leq \ordnop_q(a_0)$, both $\varphi_1(z_q)$ and $\varphi_2(z_q)$ are integral and 
        \begin{equation*}
            w_q(z_q) \in -W_q^{E_1} + W_q^{E_2} + \{-n\log q : -\ordnop_q(a_6) \leq n \leq \ordnop_q(a_0) \text{ and } n \equiv 0 \bmod{2}\}.
        \end{equation*}
        \item If $2\ordnop_q(x(z_q))> \ordnop_q(a_0)$, then $\varphi_1(z_q)$ is integral, $\varphi_2(z_q)$ is not. We have
        \begin{equation*}
            w_q(z_q) \in -W_q^{E_1} + \log|a_0x(z_q)^{-2}|_q+ 2\log |x(z_q)|_q = -W_q^{E_1} + \log|a_0|_q.
        \end{equation*}
        \item If $2\ordnop_q(x(z_q)) <-\ordnop_q(a_6)$, then $\varphi_2(z_q)$ is integral, while $\varphi_1(z_q)$ is not. We have
        \begin{equation*}
            w_q(z_q) \in W_q^{E_2} - \log|a_6x(z_q)^2|_q + 2\log|x(z_q)|_q = - \log|a_6|_q + W_q^{E_2}.
        \end{equation*}
        \end{enumerate}
        Finally we need to compute $\tilde{\rho}(z)$ for $z\in X(\Q)$, $x(z)\in\{0,\infty\}$ (if such a point exists). 
        
        If $z\in X(\Q)$ with $x(z) = 0$, then
    \begin{equation*}
    \lambda_p(\varphi_1(z)) - \alpha_1\Log^2(\varphi_1(z)) + \alpha_2\Log^2(\varphi_2(z)) = -\sum_{q\neq p} \lambda_q(\varphi_1(z)) \in \biggl\{\sum_{q\: \text{bad}} w_q: w_q\in -W_q^{E_1}\biggr\},
    \end{equation*}
    since $\varphi_1(z)$ is an integral point and $\varphi_2(z)$ is the point at infinity. The remaining summands in $\rho$ are not individually well-defined at $z$ and we circumvent this issue as explained in part (b), by expanding around $z$. For instance,  a parametrisation of the disc containing $z$ is given by 
    \begin{equation*}
        z(t) = (x(t),y(t)) = (t,  \sqrt{a_0}+O(t)),
    \end{equation*}
where $\sqrt{a_0}$ is a suitably chosen square root of $a_0$. Thus, 
    \begin{equation*}
    \varphi_2(x(t),y(t)) = (a_0t^{-2}, a_0\sqrt{a_0}t^{-3}+O(t^{-2}))
    \end{equation*}
    and so, by Proposition \ref{prop:prop_lambdap}\thinspace{}\ref{prop:prop_lambdap_log_term}, 
    \begin{equation*}
     -\lambda_p\circ\varphi_2(z(t))-2\log\circ\, x(z(t)) = 2\log\left(\frac{1}{\sqrt{a_0}}t + O(t^2)\right) -2\log(t) = -\log(a_0) + O(t).
    \end{equation*}
    We conclude that, if $a_0$ is a square in $\Q$, then
    \begin{equation*}
    \tilde{\rho}(0,\sqrt{a_0}) =-\sum_{q\neq p}\lambda_q\circ\varphi_1(0,\sqrt{a_0}) - \log(a_0)=\sum_{q\neq p}(-\lambda_q\circ \varphi_1 (0,\sqrt{a_0}) + \log|a_0|_q)\in \Omega.
    \end{equation*}
    
    Similarly,  if $z=\infty^{\pm}\in X(\Q)$, then $\varphi_2(z)$ is an integral point on $E_2$ and $\varphi_1(z)$ is the point at infinity on $E_1$. Thus, 
\begin{equation*}
    -\lambda_p(\varphi_2(z))-\alpha_1\Log^2(\varphi_1(z))+\alpha_2\Log^2(\varphi_2(z)) = \sum_{q\neq p}\lambda_q(\varphi_2(z))\in \biggl\{\sum_{q\: \text{bad}}w_q: w_q\in W_q^{E_2}\biggr\}.
\end{equation*}
As for the remaining term $\lambda_p(\varphi_1(z)) -2\log(x(z))$, we have the following. A parametrisation around $z$ is given by
\begin{equation*}
    z(t) = (x(t),y(t)) = (t^{-1},  \sqrt{a_6}t^{-3} + O(t^{-2})).
\end{equation*}
 Hence
\begin{equation*}
\varphi_1(z(t)) = (a_6 t^{-2}, a_6\sqrt{a_6}t^{-3} + O(t^{-2})).
\end{equation*}
By Proposition \ref{prop:prop_lambdap}\thinspace{}\ref{prop:prop_lambdap_log_term}, 
\begin{equation*}
\lambda_p\circ \varphi_1(z(t)) -2\log\circ\, x(z(t)) = -2\log\left(-\frac{1}{\sqrt{a_6}}t + O(t^2)\right) -2\log(t^{-1})  = \log(a_6) + O(t).
\end{equation*}
So if $a_6$ is a square in $\Q$, then
\begin{equation*}
\rho(\infty^{\pm}) = \sum_{q\neq p} (\lambda_q\circ\varphi_2(\infty^{\pm})-\log|a_6|_q)\in \Omega.
\end{equation*}
\end{proof}

As in the proof of Theorem \ref{thm:main}, for $z_q\in Z_q$, let 
\begin{equation*}
w_q(z_q) = -\lambda_q(\varphi_1(z_q)) + \lambda_q(\varphi_2(z_q))+2\log |x(z_q)|_q.
\end{equation*}
Otherwise, if $z_q\in X(\Q_q)\setminus Z_q$, we set
\begin{equation*}
    w_q(z_q) = \begin{cases}
    -\lambda_q(\varphi_1(z_q)) + \log|a_0|_q\quad & \text{if } x(z_q) = 0\\
    \lambda_q(\varphi_2(z_q)) -\log|a_6|_q\quad & \text{if } x(z_q) = \infty.
    \end{cases}
\end{equation*}

\begin{prop} \label{prop:potential}
If $q$ is a prime of potential good reduction for $X$, then $w_q$ is constant on $X(\Q_q)$. Therefore, in Theorem \ref{thm:main} we may replace $\Omega_q$ with $\{w_q(z_q)\}$, where $z_q$ is any point in $X(\Q_q)$.
\end{prop}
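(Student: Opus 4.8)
The plan is to reduce the statement to the case of good reduction by base change.

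\emph{Step 1: base change.} I would pick a finite extension $L/\Q_q$ over which $X$ acquires good reduction, with smooth proper model $\mathcal{X}/\cO_L$, and let $w\mid q$ be the corresponding place of $L$. Then $\Jac(X_L)\cong\operatorname{Pic}^0_{\mathcal{X}/\cO_L}$ is an abelian scheme, so $\Jac(X_L)$ — and hence $E_{1,L}$ and $E_{2,L}$, which are isogenous to it — have good reduction (good reduction of abelian varieties being an isogeny invariant, by N\'eron--Ogg--Shafarevich). Next I would use that the $p$-adic local N\'eron function is compatible with base extension: for $P\in E_i(\Q_q)\subseteq E_i(L)$, its value computed over $L$ at $w$ equals $\lambda_q(P)$. (This is where it matters that $\lambda_q$, for $q\neq p$, is the $p$-adic analogue of the canonical N\'eron local height with respect to $2(\infty)$, and that the Iwasawa branch of $\log$ is fixed throughout.) Writing $w_{q,L}$ for the function on $X(L)$ defined exactly as $w_q$ but with $\lambda_w$ and $|\cdot|_w$ in place of $\lambda_q$ and $|\cdot|_q$, one then has $w_q=w_{q,L}|_{X(\Q_q)}$, so it suffices to prove $w_{q,L}$ is constant on $X(L)$.

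\emph{Step 2: the good-reduction case.} Over $L$ each $E_{i,L}$ has a Weierstrass model with good reduction, obtained from \eqref{eq:E1}, \eqref{eq:E2} by a coordinate change; in those coordinates Proposition \ref{prop:prop_lambdaq}\thinspace{}(a) applies at \emph{every} point away from $\infty$, so $\lambda_w$ is just $\log\max\{1,|\cdot|_w\}$ of the good model's $x$-coordinate. Conceptually, the divisor identity
\[
\varphi_2^*\bigl(2(\infty_{E_2})\bigr)-\varphi_1^*\bigl(2(\infty_{E_1})\bigr)=\div(x^2),
\]
read off from \eqref{eq:vaphii}, shows that $z\mapsto -\lambda_w(\varphi_1(z))+\lambda_w(\varphi_2(z))$ is a N\'eron function on $X_L$ attached to the \emph{principal} divisor $\div(x^2)$; since the special fibre of $\mathcal{X}$ is irreducible, any such N\'eron function equals $-2\log|x(z)|_w$ up to an additive constant $c$, whence $w_{q,L}(z)=c$ for every $z$ (the points over $x\in\{0,\infty\}$ being handled by the same local expansions as in the proof of Theorem \ref{thm:main}\thinspace{}(c)). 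Alternatively one can simply substitute the explicit formulas for $\lambda_w$ and for $\varphi_1,\varphi_2$ into the definition of $w_{q,L}$ and check directly that the logarithmic terms cancel, exactly as in the proof of Theorem \ref{thm:main}\thinspace{}(b), leaving a quantity independent of $z$. Either way $w_{q,L}$, and hence $w_q$, is constant, and the replacement of $\Omega_q$ by $\{w_q(z_q)\}$ follows immediately.

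\emph{Main obstacle.} The hard part is that Step 2 genuinely needs \emph{potential} good reduction, not merely bad reduction: at a prime where $X$ has bad reduction that is not potentially good, the minimal regular model has reducible special fibre (its reduction graph has a loop), the vertical part of $\div_{\mathcal{X}}(x^2)$ is no longer orthogonal to horizontal divisors of degree $0$, and $w_q$ really does depend — locally constantly — on the component to which $z$ reduces. So the two points requiring care are the base-change compatibility of the $p$-adic local N\'eron functions and, once over $L$, the bookkeeping of the coordinate changes relating \eqref{eq:E1}, \eqref{eq:E2} to good models of $E_{1,L}$, $E_{2,L}$ (equivalently, the vanishing of the Green's-function contribution, which is precisely the irreducibility of the special fibre of $\mathcal{X}$). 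Everything else is the same elementary manipulation of $p$-adic heights and abelian integrals as in the proof of Theorem \ref{thm:main}.
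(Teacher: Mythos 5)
Your argument is correct, but it takes a genuinely different path from the paper. The paper's proof is a citation: constancy of $w_q$ on $Z_q$ is deduced from \cite[Lemma 5.4]{BD18} together with a computation analogous to \cite[Lemma 7.7]{BD18}, and is then extended to all of $X(\Q_q)$ by continuity of local heights; this is the one place in the paper that leans on the $p$-adic Hodge theory machinery of \cite{BD18}, as the authors themselves flag in Remark~\ref{rmk:how_does_this_compare}. You instead give a self-contained Arakelov-style argument: base-change to an extension $L/\Q_q$ where $X$ acquires good reduction, invoke base-change compatibility of the (suitably normalised) local N\'eron functions, check the divisor identity $\varphi_2^*\bigl(2(\infty_{E_2})\bigr)-\varphi_1^*\bigl(2(\infty_{E_1})\bigr)=\div(x^2)$ (which is indeed correct: $\varphi_1$ is unramified above $\infty_{E_1}$, $\varphi_2$ is unramified above $\infty_{E_2}$, and $\div(x)=(0,\sqrt{a_0})+(0,-\sqrt{a_0})-(\infty^+)-(\infty^-)$), and conclude that $-\lambda_w\circ\varphi_1+\lambda_w\circ\varphi_2$ is a canonical local height on $X_L$ attached to a principal divisor, hence equals $-2\log|x|_w$ plus a constant when the special fibre is irreducible. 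This buys independence from the $p$-adic Hodge-theoretic input, at the cost of needing two facts you should source explicitly: (i) that the pullback of the canonical N\'eron local height under $\varphi_i$ is again the canonical local height on $X$ for the pulled-back divisor (mere boundedness plus the correct singularities does not determine a N\'eron function on a higher-genus curve up to a constant, so ``canonical'' must mean the intersection-theoretic one), and (ii) that for a curve with good reduction the canonical local height of a principal divisor is $-\log|f|$ up to an additive constant. Both are standard but not free; your fallback of substituting the explicit good-model formula for $\lambda_w$ and cancelling logarithmic terms directly (which is exactly what \cite[Lemma 7.7]{BD18} does) is the safer route and tracks the paper more closely. One small slip in your closing remark: reducibility of the minimal regular model's special fibre does \emph{not} preclude potential good reduction (chains of rational curves can contract); the correct dichotomy is whether the reduction graph of the stable model over an extension has a loop, which you also state and which is the relevant condition.
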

\begin{proof}
On $Z_q$, this follows from \cite[Lemma 5.4]{BD18} and a computation analogous to that of \cite[Lemma 7.7]{BD18} (where $X$ is assumed monic). We extend to $z_q\not\in Z_q$ using continuity properties of local heights.
\end{proof}

As a first step for the determination of $X(\Q)$, we would like to be able to compute the set
\begin{equation*}
A = \{z\in X(\Q_p) : \tilde{\rho}(z)\in \Omega\}\supseteq X(\Q).
\end{equation*}
In the next section we explain how to do so up to some finite $p$-adic precision. 

\section{Precision analysis}\label{sec:prec_an}
\subsection{Preliminaries}\label{subsec:prec_intro}
Let $p$ be a prime of good reduction for the model of $X$, and hence for the given models for $E_1$ and $E_2$. In particular, $p \nmid a_0a_6$.

In Theorem \ref{thm:main}, we considered the locally analytic function $\tilde{\rho}\colon X(\Q_p)\to \Q_p$ obtained by extending $\rho\colon Z_p\to \Q_p$ defined by
\begin{equation}\label{eq:rho}
\rho(z) =     \lambda_p(\varphi_1(z)) - \lambda_p(\varphi_2(z)) - 2\log(x(z)) - \alpha_1 \Log^2(\varphi_1(z)) + \alpha_2\Log^2(\varphi_2(z)).
\end{equation}

Since $\tilde{\rho}$ is locally analytic, it can be expanded in each residue disc $D$ of $X(\Q_p)$ as a power series in $\Q_p[[t]]$, where $t$ is a local coordinate at a fixed $z\in D$ (that is, $t$ is a uniformiser at $z$ that reduces to a uniformiser for $\overline{z}\in X(\F_p)$). Our goal here is to find lower bounds for the $p$-adic valuation of the coefficients of these series, which will allow us to deduce information about the solutions to $\tilde{\rho}(z) - w$, for $w\in \Omega$, in the disc $D$.

For a similar precision analysis in the simpler setting of the classical method of Chabauty and Coleman, see also \cite[Section 3]{BBCE19}.

Given a residue disc $D$ in $X(\Q_p)$, there are two choices to make. First, we need to pick a point $z\in D$ and, secondly, we need to pick a local coordinate at $z$.  Let $\overline{z}\in X(\F_p)$ be the image of $D$ under reduction.

We proceed as follows:
\begin{enumerate}[label=(\roman*)]
    \item\label{it:affine} if $\overline{z}$ is affine with $y(\overline{z})\neq 0$, we take $z\in X(\Q_p)$ to be the unique point satisfying $x(z)\in \Z$, $0\leq x(z)\leq p-1$, $\overline{x(z)} = x(\overline{z})$ and $\overline{y(z)} = y(\overline{z})$. As for the local coordinate, we take $t = x - x(z)$. Note that then $y(t)\in \Z_p[[t]]$ is the unique solution to $y(t)^2 = F(x(t))$ such that $y(0) = y(z)$. 
    \item if $\overline{z}$ is Weierstrass (i.e.\ $y(\overline{z}) = 0$), we take $z\in X(\Q_p)$ to be the unique Weierstrass point in the disc. As for the local coordinate, choose $t = y$. Then $x(t)\in \Z_p[[t]]$ is the unique solution to $F(x(t)) = y(t)^2$ such that $x(0) = x(z)$.
    \item\label{it:infty} if $\overline{z}$ is a point at infinity, we take $z\in X(\Q_p)$ to be the unique point at infinity in the disc. For the local coordinate, take $t = x^{-1}$. Then $y(t) = \sqrt{a_6}t^{-3} + O(t^{-2})$, for a suitable choice of $\sqrt{a_6}$.
\end{enumerate}
\begin{rmk}
The existence of such a point $z\in X(\Q_p)$ and such a parametrisation in \ref{it:affine} -- \ref{it:infty} is guaranteed by the good reduction assumption and by Hensel's lemma.
\end{rmk}

We analyse each term in \eqref{eq:rho} separately and then draw conclusions at the end. In fact, for the terms that involve images of $z$ under $\varphi_1$ or $\varphi_2$ (all of them, except for $\log(x(z))$), as a preliminary step we ignore that these come from points on $X$ and analyse the corresponding terms on a generic elliptic curve. Note that, as we already saw in the proof of Theorem \ref{thm:main}, in the disc of a point at infinity or a point with $x$-coordinate reducing to $0$ modulo $p$, not every term is individually expressible as a power series. 

Recall that $\log$ denotes our chosen branch of the $p$-adic logarithm. We will write $\log_p$ for the real logarithm with respect to base $p$.

We start with an auxiliary lemma.

\begin{lemma}\label{lemma:auxiliary}
Let $f(T) = \sum_{n=0}^{\infty} B_n T^n\in \Q_p[[T]]$ with
\begin{equation*}
\ord_p(B_n) \geq - \ord_p(n) + \alpha(n) \qquad \text{for all}\ n\geq 1,
\end{equation*}
where $\alpha(n)$ is a (not necessarily strictly) decreasing function of $n$. Let $g(t) = \sum_{i=1}^{\infty} b_i t^i$ for some $b_i\in \Z_p$. Then
\begin{equation*}
f(g(t)) = B_0 + \sum_{n=1}^{\infty} C_n t^n,
\end{equation*}
with $\ord_p(C_n)\geq - \ord_p(n) + \alpha(n)$ for all $n\geq 1$.
\end{lemma}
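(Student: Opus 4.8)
The plan is to compute $f(g(t))$ by substituting the power series $g$ into $f$ and tracking valuations term by term. First I would write
\[
f(g(t)) = B_0 + \sum_{n=1}^{\infty} B_n\, g(t)^n,
\]
and observe that since $g(t) = \sum_{i\geq 1} b_i t^i$ with $b_i\in\Z_p$ and $g$ has zero constant term, the power $g(t)^n$ lies in $t^n\Z_p[[t]]$; in particular $g(t)^n = \sum_{m\geq n} d_{n,m} t^m$ with all $d_{n,m}\in\Z_p$, so $\ord_p(d_{n,m})\geq 0$. Hence the coefficient of $t^n$ in $f(g(t))$ is $C_n = \sum_{k=1}^{n} B_k\, d_{k,n}$, a finite sum since only $k\leq n$ contributes.

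Next I would bound $\ord_p(C_n)$ from below by
\[
\ord_p(C_n) \;\geq\; \min_{1\leq k\leq n}\bigl(\ord_p(B_k) + \ord_p(d_{k,n})\bigr) \;\geq\; \min_{1\leq k\leq n}\ord_p(B_k) \;\geq\; \min_{1\leq k\leq n}\bigl(-\ord_p(k) + \alpha(k)\bigr),
\]
using the hypothesis on the $B_k$ and $\ord_p(d_{k,n})\geq 0$. The point is now to show that this last minimum is at least $-\ord_p(n) + \alpha(n)$. Since $\alpha$ is decreasing, $\alpha(k)\geq\alpha(n)$ for $k\leq n$, so it suffices to show $-\ord_p(k) + \alpha(k) \geq -\ord_p(n) + \alpha(n)$ for each such $k$; equivalently $\alpha(k) - \alpha(n) \geq \ord_p(k) - \ord_p(n)$. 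This is the one step that needs a small argument rather than a monotonicity triviality, because $\ord_p(k)$ can exceed $\ord_p(n)$ for $k<n$ (e.g.\ $k=p$, $n=p+1$). The key inequality is the elementary fact that $\ord_p(k)\leq \log_p(k)\leq\log_p(n)$ for $1\leq k\leq n$, combined with the hypothesis as actually used elsewhere in the paper, where $\alpha(n)$ will be taken of the form (constant) minus a multiple of $\log_p(n)$ — but since the statement here only assumes $\alpha$ decreasing, the cleanest route is to prove the sharper bound $\ord_p(C_n)\geq \min_{1\leq k\leq n}(-\ord_p(k)+\alpha(k))$ and note that this is already $\geq -\ord_p(n)+\alpha(n)$ whenever the minimum is attained at $k=n$; in general one invokes $-\ord_p(k) \geq -\log_p(k) \geq -\log_p(n) \geq -\ord_p(n) - (\log_p(n)-\ord_p(n))$, so the slack $\log_p(n)-\ord_p(n)\geq 0$ must be absorbed by $\alpha(k)-\alpha(n)$.

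The main obstacle is precisely this last comparison: with only "$\alpha$ decreasing" one cannot in general conclude $-\ord_p(k)+\alpha(k)\geq-\ord_p(n)+\alpha(n)$ for all $k\leq n$. I expect the intended reading is that $\alpha$ is not merely decreasing but decreases slowly enough — concretely, that $\alpha(k)-\alpha(n)\geq \ord_p(k)-\ord_p(n)$ holds, which is automatic for the functions $\alpha$ that arise in Sections \ref{subsec:prec_Log}, \ref{subsec:prec_lambda} (these have the shape $c - \tfrac1{p-1}\lfloor\log_p(\cdot)\rfloor$ or similar, and one checks $\ord_p(k)\leq\log_p(k)$). So in the write-up I would either (a) state and use the bound $\ord_p(C_n)\geq\min_{1\leq k\leq n}(-\ord_p(k)+\alpha(k))$ and then verify $\geq -\ord_p(n)+\alpha(n)$ using $\ord_p(k)\leq\log_p(k)$ and the explicit form of $\alpha$, or (b) add the needed hypothesis on $\alpha$ explicitly. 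Everything else — the substitution, the integrality of $g(t)^n$, the finiteness of the sum defining $C_n$, the ultrametric inequality for valuations of sums — is routine.
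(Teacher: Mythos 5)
You correctly identified that the crude bound $\ord_p(d_{k,n}) \geq 0$ is not sharp enough: it yields only $\ord_p(C_n) \geq \min_{k\leq n}(-\ord_p(k)+\alpha(k))$, and this minimum can indeed fall below $-\ord_p(n)+\alpha(n)$ (your example $k=p$, $n=p+1$ is exactly on point). However, your conclusion — that the lemma therefore requires an additional hypothesis on $\alpha$ beyond ``decreasing'' — is wrong. The lemma is correct as stated, and the missing ingredient is a sharper estimate on $\ord_p(d_{m,n})$ that exploits the constraint coming from the coefficient of $t^n$.

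Concretely, the coefficient of $t^n$ in $g(t)^m$ is
\[
d_{m,n} = \sum_{\substack{i_1+\cdots+i_n = m\\ \sum_k k i_k = n}} \binom{m}{i_1,\ldots,i_n}\prod_{k=1}^n b_k^{i_k},
\]
and one should not merely use that the multinomial coefficient is a $p$-adic integer. Instead, observe that the constraint $\sum_k k i_k = n$ forces the existence of an index $k$ with $i_k>0$ and $\ord_p(i_k)\leq \ord_p(n)$ (otherwise every nonzero term $ki_k$ would have $\ord_p$ exceeding $\ord_p(n)$, contradicting $\ord_p(\sum k i_k)=\ord_p(n)$). For such a $k$ one has the identity
\[
\binom{m}{i_1,\ldots,i_n} = \frac{m}{i_k}\binom{m-1}{i_1,\ldots,i_{k-1},i_k-1,i_{k+1},\ldots,i_n},
\]
whose right-hand factor is an integer, giving $\ord_p\binom{m}{i_1,\ldots,i_n} \geq \ord_p(m)-\ord_p(i_k) \geq \ord_p(m)-\ord_p(n)$. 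Thus $\ord_p(d_{m,n}) \geq \ord_p(m)-\ord_p(n)$, and combined with $\ord_p(B_m)\geq -\ord_p(m)+\alpha(m)$ the term $\ord_p(m)$ cancels exactly, yielding $\ord_p(B_m d_{m,n}) \geq -\ord_p(n)+\alpha(m) \geq -\ord_p(n)+\alpha(n)$ by monotonicity of $\alpha$. This is the estimate that closes the gap you flagged; your setup (substitution, finiteness of the sum, ultrametric inequality) is fine, but the integrality bound on $d_{m,n}$ must be replaced by this refined one.
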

\begin{proof}
For $n\geq 1$, we have
\begin{align*}
C_n &= \sum_{m=1}^{n} B_m\cdot\left(\text{coefficient of } t^n \text{ in } \biggl(\sum_{i=1}^n b_i t^i\biggr)^m\right)\\
&= \sum_{m=1}^n B_m\cdot\left(\text{coefficient of } t^n \text{ in } \biggl(\sum_{i_1+\cdots + i_n =m}\binom{m}{i_1,\ldots,i_n} \prod_{k=1}^n (b_k t^k)^{i_k}\biggr)\right).
\end{align*}
Thus,
\begin{equation*}
    \ord_p(C_n) \geq \min\left\{\ord_p\binom{m}{i_1,\ldots, i_n} +\ord_p(B_m)\right\}.
\end{equation*}
where the minimum is taken over all $m\leq n$ and $i_1,\dots, i_n\geq 0$ satisfying $\sum_{k=1}^n i_k= m$ and $\sum_{k=1}^n ki_k = n$.

For such $i_1,\dots, i_n$, there must exist $k\in \{1,\dots,n\}$ for which $\ord_p(i_k) \leq \ord_p(n)$.  Then we have
\begin{equation*}
\binom{m}{i_1,\ldots,i_n} = \frac{m}{i_k} \binom{m-1}{i_1,\dots, i_{k-1},i_k -1,i_{k+1}, \dots, i_n}.
\end{equation*}
Therefore, 
\begin{align*}
    \ord_p(C_n)&\geq \min_{m\leq n}\{\ord_p(m) -\ord_p(n) + \ord_p(B_m) \}\\
    &\geq \min_{m\leq n}\{ -\ord_p(n) +\alpha(m) \}\geq -\ord_p(n) + \alpha(n),
\end{align*}
since $\alpha$ is a decreasing function.
\end{proof}

\begin{rmk}
In the special case where $f(T) = \log(1+T)$ (so $\alpha(n) = 0$ for all $n$), one could recover the same precision estimate in a more straightforward way:
\begin{equation*}
    f(g(t)) = \log(1+g(t)) = \int \frac{g^{\prime}(t)}{1+g(t)} dt.
\end{equation*}
\end{rmk}

\subsection{Precision of $\log(x(t))$-term}\label{subsec:prec_log}

\begin{lemma}\label{lemma:log}
Let $D$ be a residue disc of $X(\Q_p)$ and choose $z\in D$ and a local coordinate at $z$ as in \S \ref{subsec:prec_intro}.
\begin{enumerate}
    \item If $z\in \{\infty^{\pm}\}$, then $\log(x(t)) = -\log(t)$;
    \item If $x(z) = 0$, then $\log(x(t)) = \log(t)$;
    \item Otherwise,
    \begin{equation*}
        \log(x(t)) = \sum_{n=0}^{\infty} C_n t^n\in \Q_p[[t]],
        \end{equation*}
        with $\ord_p(C_0)\geq 1$ and $\ord_p(C_n)\geq -\ord_p(n)$
        for all $n\geq 1$.
\end{enumerate}
\end{lemma}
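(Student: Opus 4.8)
The plan is to handle each of the three cases in turn, with the only substantive case being the third (the disc of a non-Weierstrass affine point and the disc of a Weierstrass point).

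\textbf{Cases (1) and (2).} Here the computation is immediate from the choice of local coordinate in \S\ref{subsec:prec_intro}. If $z = \infty^{\pm}$, then $t = x^{-1}$, so $x(t) = t^{-1}$ as a function, and hence $\log(x(t)) = \log(t^{-1}) = -\log(t)$ (the constant branch contribution cancels because $\log$ is a homomorphism). If $x(z) = 0$, then the disc is parametrised by $z(t) = (t, \sqrt{a_0} + O(t))$, so $x(t) = t$ exactly, giving $\log(x(t)) = \log(t)$. In both cases these are literal equalities of formal expressions, not just up to higher-order terms, so nothing more is needed.

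\textbf{Case (3).} Now $\overline{z}$ is affine with $\overline{x(z)}\notin\{0\}$ and $\overline{y(z)}\neq 0$ (if $\overline{y(z)}=0$ we are in the Weierstrass case, where the local coordinate is $t=y$ and $x(t)\in\Z_p[[t]]$ with $x(0)=x(z)\in\Z_p^\times$, since $p\nmid a_0 a_6$ forces the Weierstrass point to have $x$-coordinate a $p$-adic unit); the non-Weierstrass affine case has $t = x - x(z)$ with $0\le x(z)\le p-1$ and $x(z)\not\equiv 0\bmod p$, so again $x(t) = x(z) + t$ with $x(z)\in\Z_p^\times$. In either sub-case we have $x(t)\in\Z_p[[t]]$ with $x(0)=x(z)\in\Z_p^\times$. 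Write $x(t) = x(z)(1 + u(t))$ where $u(t) = (x(t)-x(z))/x(z)\in t\Z_p[[t]]$, so $u(t) = \sum_{i\ge 1} b_i t^i$ with $b_i\in\Z_p$. Then
\begin{equation*}
\log(x(t)) = \log(x(z)) + \log(1 + u(t)).
\end{equation*}
The constant term is $C_0 = \log(x(z))$ with $x(z)\in\Z_p^\times$, and since $p$ is odd $\log$ maps $\Z_p^\times = \mu_{p-1}\times(1+p\Z_p)$ into $p\Z_p$, so $\ord_p(C_0)\ge 1$. For the remaining terms, apply Lemma \ref{lemma:auxiliary} with $f(T) = \log(1+T) = \sum_{n\ge 1}(-1)^{n-1}T^n/n$, so that $B_n = (-1)^{n-1}/n$ satisfies $\ord_p(B_n) = -\ord_p(n) = -\ord_p(n)+\alpha(n)$ with $\alpha\equiv 0$ (constant, hence decreasing), and with $g(t) = u(t)\in t\Z_p[[t]]$. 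The lemma gives $\log(1+u(t)) = \sum_{n\ge 1} C_n t^n$ with $\ord_p(C_n)\ge -\ord_p(n)$ for all $n\ge 1$, which is exactly the claimed bound.

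\textbf{Anticipated obstacle.} There is no real obstacle here; the lemma does all the work. The only points requiring care are bookkeeping ones: confirming that in case (3) the point $z$ really does have $x(z)\in\Z_p^\times$ in both the Weierstrass and non-Weierstrass sub-cases (this uses $p\nmid a_0a_6$, i.e. good reduction, to rule out $x(z)\equiv 0$), so that the factorisation $x(t) = x(z)(1+u(t))$ has $u(t)\in t\Z_p[[t]]$ with integral coefficients as required by Lemma \ref{lemma:auxiliary}; and noting that $\ord_p(\log(x(z)))\ge 1$ for a $p$-adic unit when $p$ is odd. Everything else is a direct citation of Lemma \ref{lemma:auxiliary}.
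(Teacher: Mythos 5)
Your proof is correct and follows essentially the same route as the paper's: the first two cases are immediate from the chosen parametrisations, and the third is handled by factoring $x(t) = x(z)\bigl(1 + u(t)\bigr)$ with $x(z)\in\Z_p^\times$ and $u(t)\in t\Z_p[[t]]$, then applying Lemma \ref{lemma:auxiliary} to $f(T)=\log(1+T)$ with $\alpha\equiv 0$. The only difference is that you spell out the bookkeeping (the Weierstrass versus non-Weierstrass sub-cases, the fact that $p\nmid a_0a_6$ forces $x(z)\in\Z_p^\times$, and the explicit verification that $\ord_p(\log(x(z)))\geq 1$ for a unit when $p$ is odd), whereas the paper compresses this into the assertion that $x(t)=\alpha(1+\sum b_it^i)$ with $\alpha\in\Z_p^\times$, $b_i\in\Z_p$, and declares the constant-term claim immediate.
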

\begin{proof}
The first two cases are trivial. For the remaining case, we have
\begin{equation*}
x(t) = \alpha\biggl(1 + \sum_{i=1}^{\infty} b_i t^i\biggr), \qquad \text{for some } b_i\in \Z_p, \alpha\in \Z_p^{\times}.
\end{equation*}
Therefore,
\begin{equation*}
    \log(x(t)) = \log(\alpha) + \log\biggl(1+  \sum_{i=1}^{\infty} b_i t^i\biggr) = \log(\alpha) + O(t),
\end{equation*}
which shows the claim on the constant term. In order to bound the valuation of the other terms, we apply Lemma \ref{lemma:auxiliary} to $f(T) = \sum_{n=1}^{\infty}\frac{(-1)^{n+1}}{n} T^n$ and $g(t) = \sum_{i=1}^{\infty} b_i t^i$. 
\end{proof}

\subsection{Precision of $\Log$-terms}\label{subsec:prec_Log}
Let $E$ be an elliptic curve over $\Q_p$ given by the Weierstrass equation
\begin{equation*}
E : y^2 = x^3 + A_2 x^2+A_4x+A_6, \qquad A_i \in \Z_p.
\end{equation*}
Recall that, for $P\in E(\Q_p)$, we have 
\begin{equation*}
\Log(P) = \int_{\infty}^{P} \omega, \qquad \text{where }\omega = \frac{dx}{2y},
\end{equation*}
and the integral is first defined by formal anti-differentiation in the formal group of $E$ at $p$ and then extended to $E(\Q_p)$ by linearity.

In order to obtain the expansion of $\Log$ in a local coordinate $T$ for a residue disc, we may break up the path of integration: given $P_0\in E(\Q_p)$ reducing to $P$ modulo $p$ and with $T(P_0)=0$, we have
\begin{equation*}
\Log(P) = \Log(P_0) + \int_{P_0}^P \omega,
\end{equation*}
where the latter integral can be computed by expanding $\omega$ as a power series in $T$, formally integrating and evaluating at $T(P)$.
\begin{lemma}\label{lemma:Log}
Let $T$ be a local coordinate for an arbitrary point $P_0$ in a residue disc of $E(\Q_p)$. Then
\begin{equation*}
\Log^2(P(T)) = \sum_{n=0}^{\infty} C_nT^n,
\end{equation*}
where $\ord_p(C_0),\ord_p(C_1) \geq 0$ and $\ord_p(C_n)\geq -\lfloor{\log_p(n-1)\rfloor}-\ord_p(n)$ for $n\geq 2$.
Moreover, if $p\nmid \#E(\F_p)$, then $\ord_p(C_0) \geq 2$ and $\ord_p(C_1)\geq 1$.
\end{lemma}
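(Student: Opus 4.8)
The plan is to first understand the local expansion of $\Log$ itself in the coordinate $T$, and then square it, keeping careful track of denominators. Writing $T$ for the local coordinate at $P_0$, the differential $\omega = \frac{dx}{2y}$ pulls back to a power series in $\Z_p[[T]]$: in each of the parametrisations of \S\ref{subsec:prec_intro} applied to $E$ (affine non-Weierstrass, Weierstrass, and the disc at infinity), $\omega$ has a holomorphic expansion $\omega = \bigl(\sum_{i\geq 0} \gamma_i T^i\bigr)\,dT$ with $\gamma_i\in\Z_p$, by the good reduction assumption — in the disc at infinity one uses the standard fact that $\omega$ extends holomorphically there, and that $x, y$ have the indicated expansions in $t = x^{-1}$. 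Formally antidifferentiating gives $\int_{P_0}^{P(T)}\omega = \sum_{i\geq 0}\frac{\gamma_i}{i+1}T^{i+1}$, so that
\begin{equation*}
\Log(P(T)) = \Log(P_0) + \sum_{n\geq 1} \beta_n T^n, \qquad \ord_p(\beta_n)\geq -\ord_p(n)
\end{equation*}
for $n\geq 1$, with $\beta_n = \gamma_{n-1}/n$. This is exactly the shape to which Lemma \ref{lemma:auxiliary} applies, with $\alpha(n)\equiv 0$.

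Next I would square. Write $L_0 = \Log(P_0)$ and $h(T) = \sum_{n\geq 1}\beta_n T^n$, so $\Log^2(P(T)) = L_0^2 + 2L_0 h(T) + h(T)^2$. The term $2L_0 h(T)$ contributes $2L_0\beta_n$ to the coefficient of $T^n$; since $\Log$ takes values in $\Z_p$ on $E(\Q_p)$ (its image lies in the $\Z_p$-span of the period of $\omega$, and more concretely $L_0\in\Z_p$ because $\omega$ is holomorphic and integral), we get $\ord_p(2L_0\beta_n)\geq -\ord_p(n)$. For $h(T)^2 = \sum_n\bigl(\sum_{j=1}^{n-1}\beta_j\beta_{n-j}\bigr)T^n$, each summand $\beta_j\beta_{n-j}$ has valuation $\geq -\ord_p(j)-\ord_p(n-j)$, and the elementary inequality $\ord_p(j)+\ord_p(n-j)\leq \lfloor\log_p(n-1)\rfloor + \ord_p(n)$ (valid for $1\leq j\leq n-1$: one checks that if $p^a\mid j$ and $p^b\mid n-j$ with, say, $a\leq b$, then $p^a\mid n$, so $a\leq\ord_p(n)$ and $p^b\leq n-j\leq n-1$, giving $b\leq\log_p(n-1)$) yields $\ord_p\bigl(\sum_j\beta_j\beta_{n-j}\bigr)\geq -\lfloor\log_p(n-1)\rfloor - \ord_p(n)$. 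Combining the three pieces gives the stated bound for $n\geq 2$; for $n=1$ only $2L_0\beta_1$ appears (with $\beta_1=\gamma_0\in\Z_p$), so $\ord_p(C_1)\geq 0$, and $C_0 = L_0^2$ has $\ord_p(C_0)\geq 0$.

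For the refined statement, suppose $p\nmid\#E(\F_p)$. Then $P_0$, reducing to a point of $E(\F_p)$, lies in the kernel of reduction after multiplication by $\#E(\F_p)$, i.e.\ $\#E(\F_p)\cdot P_0$ reduces to $\infty$; since $\Log$ is a homomorphism, $\#E(\F_p)\cdot L_0 = \Log(\#E(\F_p)P_0)$, and $\Log$ of a point in the kernel of reduction lies in $p\Z_p$ (it is given by integrating $\omega$ in the formal group starting from a point with $T$-coordinate in $p\Z_p$). As $p\nmid\#E(\F_p)$, this forces $L_0\in p\Z_p$, hence $\ord_p(C_0)=\ord_p(L_0^2)\geq 2$ and $\ord_p(C_1)=\ord_p(2L_0\beta_1)\geq 1$. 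The main obstacle — really the only point requiring care — is the uniform treatment of the three types of residue disc (especially verifying that $\omega$ has an integral holomorphic expansion in the disc at infinity), together with the combinatorial valuation inequality $\ord_p(j)+\ord_p(n-j)\leq\lfloor\log_p(n-1)\rfloor+\ord_p(n)$; everything else is bookkeeping on top of Lemma \ref{lemma:auxiliary}.
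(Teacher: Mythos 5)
Your decomposition $\Log^2(P(T)) = L_0^2 + 2L_0 h(T) + h(T)^2$, the observation that $\omega$ pulls back to an integral power series so that $h(T)$ has $n$-th coefficient of valuation $\geq -\ord_p(n)$, and the combinatorial inequality $\ord_p(j)+\ord_p(n-j)\leq\lfloor\log_p(n-1)\rfloor+\ord_p(n)$ all match the paper's proof. However, there is a gap in your justification that $L_0 = \Log(P_0)\in\Z_p$. You write that this holds ``because $\omega$ is holomorphic and integral'' and that ``its image lies in the $\Z_p$-span of the period of $\omega$,'' but this is not a valid argument: integrality of $\omega$ only controls $\Log$ inside a single residue disc (the disc of infinity), whereas $P_0$ is a point of the disc under consideration, which need not reduce to $\infty$. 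To pass between discs one has no choice but to use that $\Log$ is a homomorphism: if $m$ is the order of the reduction of $P_0$, then $\Log(P_0)=\Log(mP_0)/m$ with $\Log(mP_0)\in p\Z_p$, and one needs to control $\ord_p(m)$. The paper does this via the Hasse bound, which gives $\ord_p(m)\leq\ord_p(\#E(\F_p))\leq 1$ for odd $p$, hence $\ord_p(L_0)\geq 0$.

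Notice that you in fact carry out essentially this argument when proving the refined statement (you multiply $P_0$ by $\#E(\F_p)$ to land in the kernel of reduction and then divide the logarithm back out); the missing step in the general case is to apply the same trick with $m$ and then invoke the Hasse bound to conclude $\ord_p(m)\leq 1$. Once this is inserted, your proof is complete and follows the same route as the paper.
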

\begin{proof}
We are interested in finding lower bounds for the valuation of the coefficients of 
\begin{equation*}
\Log^2(P(T)) = \Log^2(P_0)  + 2\Log(P_0)\int_{P_0}^{P(T)} \omega (T) + \left(\int_{P_0}^{P(T)} \omega (T)\right)^2. \end{equation*} 
Since $\omega$ is holomorphic and non-vanishing and $T$ reduces to a uniformiser modulo $p$, we have 
\begin{equation*}
\omega(T) = f(T) dT \qquad \text{for some } f(T)\in \Z_p[[T]]^{\times}.
\end{equation*}
Therefore,
\begin{equation}\label{eq:int_omega}
    \int_{P_0}^{P(T)}\omega(T) =  c_0T + \frac{c_1}{2}T^2 + \frac{c_2}{3}T^3 + \cdots ,\qquad \text{for some}\quad c_i\in \Z_p, c_0\in \Z_p^{\times}.
\end{equation}
Let $m>0$ be the smallest integer such that $mP_0$ belongs to the disc at infinity. Since $m$ is a divisor of $\#E(\F_p)$, by the Hasse bound \cite[V, Theorem 1.1]{silverman_AEC} we know that $0\leq \ord_p(m)\leq 1$. We have
\begin{equation*}
\Log(P_0) = \frac{\Log(mP_0)}{m}.
\end{equation*}
Now, by \eqref{eq:int_omega} with $P_0 = \infty$ we see that $\ord_p(\Log(mP_0))\geq 1$ and hence, by the above considerations on the valuation of $m$, $\ord_p(\Log(P_0))\geq 1-\ord_p(\#E(\F_p))\geq 0$. The statements about the constant and linear coefficient follow.

By \eqref{eq:int_omega}, the $n$-th coefficient in the $T$-expansion of the integral of $\omega(T)$ has valuation at least $-\ord_p(n)$. As for
\begin{align*}
\left(\int_{P_0}^{P(T)} \omega (T)\right)^2 = \left(c_0T + \frac{c_1}{2}T^2 + \frac{c_2}{3}T^3 + \cdots\right)^2 =  \sum_{n = 2}^{\infty} \alpha_n T^n
\end{align*}
we have
\begin{equation*}
\ord_p(\alpha_n) \ge  \min_{\substack{i+j = n\\
i,j\geq 1}}\ord_p\left(\frac{ 1}{ij}\right) \geq -\lfloor{\log_p(n-1)\rfloor}  -\ord_p(n),
\end{equation*}
since $\ord_p(i), \ord_p(j)\leq \lfloor{\log_p(n-1)\rfloor}$, but also $\min\{\ord_p(i),\ord_p(j)\}\leq \ord_p(n)$.
\end{proof}

\subsection{Precision of $\lambda_p$-terms} \label{subsec:prec_lambda}
Let $E$ be as in \S \ref{subsec:prec_Log}.
We are ultimately interested in applying the considerations of this subsection to the case where $E$ is $E_1$ or $E_2$.
To simplify our task (or, in fact, to obtain better bounds; see below), we make the following 

\begin{assumption}
\label{ass:ordinary}
The prime $p$ is of ordinary reduction for $E_1$ and $E_2$. 
\end{assumption}

Thus we assume here that $p$ is a prime of good ordinary reduction for $E$, so we can and shall work with the canonical local height at $p$. This is defined in terms of the canonical $p$-adic sigma function\footnote{If we do not assume that $p$ is of ordinary reduction for $E$, we can replace the Mazur--Tate $p$-adic sigma function with some other $p$-adic sigma function $\sigma_p^{\prime}(T) = T + O(T^2)\in \Q_p[[T]]$; for example, Bernardi's \cite{bernardi}. The precision bounds then need to be modified appropriately, since $\sigma_p^{\prime}(T)$ does not, in general, have $p$-adically integral coefficients.}
\begin{equation*}
\sigma_p(T) = T + O(T^2)\in \Z_p[[T]]
\end{equation*}
of Mazur--Tate \cite{MT91}, as follows. We view $\sigma_p$ as a function on points of $E(\Q_p)$ in the formal group $E_f$ by setting
\begin{equation*}
\sigma_p(P) \colonequals \sigma_p\left(-\frac{x(P)}{y(P)}\right),\qquad \text{for}\ P\in E_f(\Q_p).
\end{equation*}

If $P\in E_f(\Q_p)\setminus \{O\}$, its canonical local height at $p$ is then given by
\begin{equation}\label{eq:lambadp_inf}
\lambda_p(P) = -2\log(\sigma_p(P)).
\end{equation}
Note that, in this case, $\lambda_p(P)$ depends on the choice of a branch of the $p$-adic logarithm and general theory on heights suggests that we should choose the branch of the logarithm which is trivial at $p$ (cf.\ \cite[Remark 2.1]{QCnfs}). In practice, for our applications to computing $\tilde{\rho}$ this will not matter, in view of the global cancellation of the logarithmic terms.

If $P$ is non-torsion and does not reduce to the point at infinity, then let $m$ be a positive integer such that $mP\in E_f(\Q_p)$. If we choose the smallest such $m$, then $m$ divides $\#E(\F_p)$.  We define 
\begin{equation}\label{eq:lambdapaff}
\lambda_p(P) = -\frac{2}{m^2}\log\left(\frac{\sigma_p(mP)}{\phi_m(P)}\right),
\end{equation}
where $\phi_m\in \Z[A_2,A_4,A_6][x,y]$ is the $m$-th division polynomial \cite[Exercise 3.7]{silverman_AEC}. This is characterised uniquely up to multiplication by $\pm 1$ by the following properties:
\begin{itemize}
    \item $\div(\phi_m) = \sum_{Q\in E[m]} (Q) - m^2(\infty)$;
     \item $\phi_m^2$ is a polynomial in $x$ only with leading term $m^2x^{m^2-1}$ and coefficients in $\Z[A_2,A_4, A_6]$.
    \end{itemize}

The value $\lambda_p(P)$ for $P\not \in E_f(\Q_p)$ as above is independent of the branch of the $p$-adic logarithm and the definition can be extended to torsion points by continuity. 

We want to study $\lambda_p$ as a function on a residue disc. By \eqref{eq:lambadp_inf}, if $T$ is a local coordinate at the point at infinity, $\lambda_p$ can be expressed in terms of $T$ as $-2\log(f(T))$ where $f(T)\in T\Z_p[[T]]^{\times}$. Thus, the analysis can be obtained by applying Lemma \ref{lemma:auxiliary} and is similar to \S \ref{subsec:prec_log}. We postpone it until we also understand the argument of the logarithm in \eqref{eq:lambdapaff} as a power series. 

So let $P(T)$ be the parametrisation of a disc on an elliptic curve not reducing to the point at infinity modulo $p$ and let $m $ such that $mP(T)$ reduces to infinity for all $T \in p \Z_p$.

    \begin{lemma}
    \label{lemma:arg_lambda}
    With the above assumptions, we have
    $$\frac{\sigma_p(mP(T))}{\phi_m(P(T))} = c_0 + O(T)\in\Z_p[[T]]\qquad \text{with } c_0\in\Z_p^{\times}.$$
    \end{lemma}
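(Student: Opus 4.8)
The plan is to show that both the numerator $\sigma_p(mP(T))$ and the denominator $\phi_m(P(T))$ are elements of $T^{m^2}\Z_p[[T]]^{\times}$ — that is, each has $T$-valuation exactly $m^2$ and unit leading coefficient — so that their quotient is a unit power series $c_0 + O(T)$ with $c_0\in\Z_p^{\times}$. The key point is that $m$ is chosen so that $mP(T)$ lies in the formal group $E_f(\Q_p)$ for all $T\in p\Z_p$, i.e.\ the reduction of $mP(T)$ is the point at infinity, while $P(T)$ itself reduces to an affine point.

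For the denominator, I would use the divisor relation $\div(\phi_m) = \sum_{Q\in E[m]}(Q) - m^2(\infty)$. Since $P(T)$ does \emph{not} reduce to infinity modulo $p$, none of the $m$-torsion points $Q$ reduces to $P(\bar 0)$ (as the $m$-torsion injects into the reduction — here one uses $p\nmid m$ when $p$ is of good reduction and $p\nmid\#E(\F_p)$ is \emph{not} needed, only that $m\mid\#E(\F_p)$ and the formal group has no $p$-torsion, so reduction is injective on $E[m]$). Hence $\phi_m$ has neither a zero nor a pole at $P(\bar 0)$, so $\phi_m(P(T))$, expanded in the local coordinate $T$, is a unit in $\Z_p[[T]]$; integrality of the coefficients follows because $\phi_m\in\Z[A_2,A_4,A_6][x,y]$ and $x(T),y(T)\in\Z_p[[T]]$ (with $y(T)$ a unit times a suitable thing, but in any case integral in the chosen parametrisation of an affine disc). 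Strictly, one should be slightly careful about the pole at infinity contributing: since $P(\bar 0)\neq\infty$, that pole is irrelevant in this disc, and $\phi_m(P(T))\in\Z_p[[T]]^{\times}$.

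For the numerator, recall $\sigma_p(S) = S + O(S^2)\in\Z_p[[S]]$ where $S = -x/y$ is the local parameter at infinity on $E$. We apply this with $S = S(mP(T))$, the value of $-x/y$ at the point $mP(T)$. Because $mP(T)$ reduces to infinity precisely for $T\in p\Z_p$ and to an affine point for $T=0$ shifted — more precisely, writing the formal-group coordinate of $mP(T)$ as a power series in $T$ — the crucial input is the order of vanishing. One computes that $S(mP(T))$ vanishes to order exactly $m^2$ at $T=0$: this is the standard fact that multiplication-by-$m$ on the formal group, composed with the degree-$m$ shape of the map, sees the $m^2$ points of $E[m]$ all reducing into this disc is \emph{wrong} — rather, the clean statement is that in the identification $x(T) = T^{-1}(\text{unit})$ \emph{no}; let me instead say: the function $\sigma_p(mP)/\phi_m(P)$ on $E$ is, by the defining divisor of $\phi_m$ and the fact that $\sigma_p$ has a simple zero at $\infty$ in the formal parameter and its associated divisor (after scaling by $\psi_m$) is $m^2(\infty) - \sum_{Q\in E[m]}(Q)$, a \emph{rational function with empty divisor on the disc of $P(\bar 0)$}, hence a unit there; equivalently this is exactly the classical identity $\sigma(mz) = \psi_m(z)\,\sigma(z)^{m^2}$ for the Weierstrass $\sigma$, of which the $p$-adic version gives $\sigma_p(mP)/\phi_m(P) = (\text{unit})\cdot\sigma_p(P)^{m^2}$, and $\sigma_p(P(T)) = S(P(T)) + O(S^2)$ with $S(P(T))\in\Z_p^{\times}\cdot(\text{something}) + O(T)$ — but $P(\bar 0)$ is affine, so $S(P(\bar 0))$ is a finite value, generally a \emph{unit} or not. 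I would phrase it cleanly: by the $p$-adic sigma function identity (Mazur--Tate, \cite{MT91}), $\sigma_p(mP) = \phi_m(P)\,\sigma_p(P)^{m^2}$ up to sign as functions on $E_f$, and more generally this extends to an identity of rational functions on $E$ where $\sigma_p(P)$ is interpreted via the canonical local height; substituting $P = P(T)$ and using that $P(\bar 0)$ is affine and $p$ is of good reduction gives that $\sigma_p(mP(T))/\phi_m(P(T))$ is a unit in $\Z_p[[T]]$, with the leading coefficient being an appropriate power of a unit, hence a unit.

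The main obstacle I anticipate is making the sigma-function identity $\sigma_p(mP) = \pm\phi_m(P)\sigma_p(P)^{m^2}$ precise in the $p$-adic setting and away from the formal group — i.e.\ tracking that the ``value'' $\sigma_p(P(T))$ for $P$ in an affine disc is the correct object (it is $\exp(-\tfrac12\lambda_p(P))$ with the branch chosen so as to make this well-defined up to units, which is exactly \eqref{eq:lambdapaff} read backwards) and that its reduction is a unit. Concretely, the cleanest route avoiding this subtlety is the direct one I gave first: (i) $\phi_m(P(T))\in\Z_p[[T]]$ with nonzero constant term, by the divisor computation plus integrality of the division polynomial and of $x(T),y(T)$; (ii) $\sigma_p(mP(T))$: since $mP(T)\in E_f(\Q_p)$ has formal parameter $S(T) = -\tfrac{x(mP(T))}{y(mP(T))}$ which lies in $p\Z_p[[T]]$ and one shows $\ord_T S(T) = m^2$ by comparing the two sides' divisors (both sides of $\div$), and $\sigma_p(S) = S + O(S^2)$ so $\sigma_p(mP(T)) = S(T)(1 + O(S(T))) \in T^{m^2}\Z_p[[T]]^\times$; (iii) divide. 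Establishing $\ord_T S(T) = m^2$ rigorously (rather than just $\geq m^2$) is the real content, and it follows from the fact that $\phi_m^2$ has degree $m^2-1$ in $x$ with leading coefficient $m^2$, so that along the parametrisation the orders of zero/pole match up forcing equality; I would write out that bookkeeping carefully, as it is the only non-formal step.
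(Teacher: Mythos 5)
The key step of your argument --- that $\phi_m(P(T))$ is a unit in $\Z_p[[T]]$ --- is false, and the reasoning you offer for it is a non sequitur. You claim that because $P(\bar{0})\neq\infty$, ``none of the $m$-torsion points $Q$ reduces to $P(\bar{0})$'', appealing to injectivity of reduction on $E[m]$. But injectivity of reduction does not imply the $m$-torsion avoids $P(\bar{0})$; quite the opposite. By construction $m$ is the order of $P(\bar{0})$ in $E(\F_p)$, so $P(\bar{0})$ \emph{is} itself an $m$-torsion point of $E(\F_p)$, and (when $p\nmid m$) the bijection $E[m](\overline{\Q_p})\to E[m](\overline{\F_p})$ produces exactly one $m$-torsion point $Q\in E(\overline{\Q_p})$ lying inside the residue disc of $P(\bar{0})$. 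Unless $P_0=P(0)$ happens to coincide with $Q$, one has $\phi_m(P_0)\neq 0$ but with strictly positive $p$-adic valuation (a zero of $\phi_m(P(T))$ sits nearby in the disc), so $\phi_m(P(T))$ is \emph{not} a unit in $\Z_p[[T]]$. Your other variant, that $\phi_m(P(T))\in T^{m^2}\Z_p[[T]]^\times$, fails for the same reason. Likewise, the claim that $\sigma_p(mP(T))$ vanishes to order $m^2$ at $T=0$ is wrong: $P_0$ is not assumed to be $m$-torsion, so in general $mP_0\neq\infty$ and $\sigma_p(mP(T))$ does not vanish at $T=0$ at all. The role of $m$ is to ensure $mP(T)\in E_f(\Q_p)$ for $T\in p\Z_p$, which is a constraint on the \emph{$p$-adic} valuation of $-x(mP(T))/y(mP(T))$, not on its $T$-adic order; and since $[m]$ is \'etale, any zero it does have is simple, not of order $m^2$.

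What is actually true --- and what the paper proves --- is that $\phi_m(P(T))$ and $\sigma_p(mP(T))$ have the \emph{same} zeros in the open unit disc $|T|_p<1$, with the same (simple) multiplicities: these are precisely the $T$-values of the $m$-torsion points in the disc of $P(\bar{0})$. The paper first establishes, with some care, that $\tau = -x(mP(T))/y(mP(T))$ lies in $\Z_p[[T]]$ with $p\mid\tau(0)$ (nontrivial because $mP(T)$ is a priori only a point over $\Frac(\Z_p[[T]])$), then applies $p$-adic Weierstrass preparation to both $\phi_m(P(T))$ and $\sigma_p(mP(T))$ to conclude the quotient is $p^k$ times a unit power series, and finally pins down $k=0$ by comparing $p$-adic valuations of numerator and denominator at a single well-chosen test value $T_0\in p\Z_p$ with $mP(T_0)\neq\infty$, using the standard identity relating $\phi_m$ to the formal group parameter of $mP$ at a prime of good reduction. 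Your instinct that the Mazur--Tate identity $\sigma_p(mP) = \pm\phi_m(P)\sigma_p(P)^{m^2}$ should be relevant is reasonable, and you correctly flag the difficulty of giving it meaning away from the formal group; the paper sidesteps precisely that difficulty by arguing on divisors and valuations rather than trying to make sense of $\sigma_p(P(T))$ on an affine disc.
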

    \begin{proof}
Since $\phi_m(P(T))\in \Z_p[[T]]$, by the $p$-adic Weierstrass preparation theorem we may factor it as \begin{equation*}
    p^{n}F(T)u(T)
\end{equation*} 
where $n$ is an integer, $u(T)\in \Z_p[[T]]^{\times}$ is a unit power series and $F(T)$ is a distinguished polynomial. In particular, the zeros in $\overline{\Q_p}$ of $F(T)$ all have positive valuation, and hence correspond to $m$-torsion points in the disc of $P(T)$ (since $P(T)$ converges on $|T|_p<1$).

Now we turn our attention to $\sigma_p(mP(T))$. First note that $mP(T)$ may be viewed as a point of $E$ over $\Frac(\Z_p[[T]])$ (since $x(T),y(T)\in \Z_p[[T]]$). Thus, we may use the $p$-adic Weierstrass preparation theorem to study
\begin{equation*}
    \tau = -\frac{x(mP(T))}{y(mP(T))}\in \Frac(\Z_p[[T]]).
\end{equation*}
We would like to show that $\tau \in \Zp[[T]]$ with constant term divisible by $p$. 
Since $\tau$ has no poles for $|T|_p<1$, we must have $\tau\in \Q\cdot \Z_p[[T]]$.
For every $|T_0|_p<1$, we have $|\tau(T_0)|_p<1$; hence picking $T_0$ with $|T_0|_p$ close enough to $1$ shows that $\tau \in \Z_p[[T]]$. Indeed, write $\tau = \alpha g(T)$, where $\alpha = p^{k}$ and $g(T) = \sum_{i = 0}^{\infty} c_i T^i\in \Z_p[[T]]\setminus p\Z_p[[T]]$. Let $r$ be minimal such that $\ord_p(c_r) = 0$. Then, for every $T_0$ with $|T_0|_p < 1$ and for every $i> r$
\begin{equation*}
|c_i T_0^i|_p\leq |T_0^i|_p < |T_0^r|_p = |c_r T_0^r|_p.
\end{equation*}
Furthermore, if $|T_0|_p>|c_i|_p^{1/(r-i)}$ for all $i< r$, then for all $i<r$ we have
\begin{equation*}
|c_iT_0^i|_p <|T_0|_p^r = |c_r T_0^r|_p;
\end{equation*}
hence, $|g(T_0)|_p = |T_0^r|_p$. If $k < 0$, we may pick $T_0$ that further satisfies $|T_0^r|_p>|\alpha^{-1}|_p$, which gives
\begin{equation*}
|\tau(T_0)|_p = |\alpha T_0^r|_p> 1,
\end{equation*}
a contradiction. 
Finally, since $\tau(0) \in p\Z_p$, we have the claim on the constant term.

So $\sigma_p(mP(T))\in \Z_p[[T]]$; it has simple zeros in the open unit disc precisely at the $m$-torsion points in the disc $P(T)$. Comparing with $\phi_m(P(T))$, we get that
\begin{equation*}
\frac{\sigma_p(mP(T))}{\phi_m(P(T))} \in \Q\cdot \Z_p[[T]].
\end{equation*}
Moreover this quotient is non-vanishing on $|T|_p<1$, hence in fact it is, up to multiplication by $p^k$ for some $k\in \Z$, a unit power series. This implies that at a given $T_0$ in the open unit disc, the valuation of the quotient is $-k$. But picking $T_0\in p\Z_p$ such that $mP(T_0) \neq \infty$, we have that $\ord_p(\phi_m(P(T_0))) = \ord_p(d(mP(T_0))) = \ord_p(T(mP(T_0))) = \ord_p(\sigma(mP(T_0)))$, hence $k = 0$. Here $d$ denotes the square-root of the denominator of the $x$-coordinate and the first equality holds true because $p$ is a prime of good reduction (see e.g.\ \cite[Proposition 1]{wuthrich}). 
    \end{proof}
    
    \begin{cor}\label{cor:lambda} 
    Let $T$ be a local coordinate at a point $P\in E(\Q_p)$, which we assume either at infinity or not in the disc of infinity and let $m$ be the order of the reduction of $P$ modulo $p$. Then
    \begin{equation*}
     \lambda_p(P(T)) = -2\delta\log(T) + \sum_{n=0}^{\infty} C_n T^n,
    \end{equation*}
    where $\delta = 1$ if $P$ is the point at infinity and $0$ otherwise, and $\ord_p(C_0)\geq 1 -2\ord_p(m)$, $\ord_p(C_n)\geq -\ord_p(n)-2\ord_p(m)$ for $n\geq 1$.
    \end{cor}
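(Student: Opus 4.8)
The plan is to combine the three preceding results — Proposition \ref{prop:prop_lambdap}, Lemma \ref{lemma:arg_lambda}, and the auxiliary Lemma \ref{lemma:auxiliary} — to read off the shape and the coefficient valuations of $\lambda_p(P(T))$ from its defining formulas \eqref{eq:lambadp_inf} and \eqref{eq:lambdapaff}.

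First I would split into the two cases according to whether $P$ reduces to the point at infinity. If $P$ is the point at infinity, then by \eqref{eq:lambadp_inf} we have $\lambda_p(P(T)) = -2\log(\sigma_p(P(T)))$, and since $\sigma_p(S) = S + O(S^2)\in\Z_p[[S]]$ while $S = -x/y$ is, as a function of the local coordinate $T$, a unit multiple of $T$ (here $\ord_p(m)=0$ as $m=1$), we get $\sigma_p(P(T)) = T\cdot u(T)$ with $u(T)\in\Z_p[[T]]^\times$. Writing $\log(Tu(T)) = \log(T) + \log(u(T))$ and expanding $\log(u(T)) = \log(u(0)) + \log(1 + (u(T)/u(0) - 1))$, the constant term $\log(u(0))$ has valuation $\geq 1$ since $u(0)\in\Z_p^\times$ (Iwasawa branch), and Lemma \ref{lemma:auxiliary} applied with $f(T) = \sum_{n\geq 1}\frac{(-1)^{n+1}}{n}T^n$ (so $\alpha\equiv 0$) bounds the remaining coefficients by $-\ord_p(n)$. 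Multiplying by $-2$ preserves these bounds and yields $\delta = 1$, $\ord_p(C_0)\geq 1$, $\ord_p(C_n)\geq -\ord_p(n)$, consistent with the claimed bounds at $\ord_p(m) = 0$.

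Second, if $P$ does not reduce to infinity, I use \eqref{eq:lambdapaff}: $\lambda_p(P(T)) = -\frac{2}{m^2}\log\!\left(\sigma_p(mP(T))/\phi_m(P(T))\right)$. By Lemma \ref{lemma:arg_lambda} the argument equals $c_0 + O(T)\in\Z_p[[T]]$ with $c_0\in\Z_p^\times$, so again $\log$ of it is $\log(c_0) + \log(1 + h(T))$ with $h\in T\Z_p[[T]]$; the constant term has valuation $\geq 1$ and the higher coefficients have valuation $\geq -\ord_p(n)$ by Lemma \ref{lemma:auxiliary}. Dividing by $m^2$ shifts every valuation bound down by $2\ord_p(m)$, producing $\delta = 0$, $\ord_p(C_0)\geq 1 - 2\ord_p(m)$, $\ord_p(C_n)\geq -\ord_p(n) - 2\ord_p(m)$, exactly as stated. (Since $m\mid\#E(\F_p)$ and $p$ is ordinary, $\ord_p(m)\leq 1$, so these bounds are effective, though this bound is not strictly needed for the statement.)

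The main obstacle is bookkeeping rather than conceptual: one must be careful that the "$O(T)$" terms entering the logarithm genuinely have $\Z_p$-integral coefficients — which is precisely the content of Lemma \ref{lemma:arg_lambda} and of the integrality of $\sigma_p$ (this is where Assumption \ref{ass:ordinary} is used, to have the Mazur--Tate $\sigma_p$ with integral coefficients) — and that the $1/m^2$ factor is the only source of negative contributions beyond the harmonic $-\ord_p(n)$ coming from term-by-term integration inside $\log(1+\cdot)$. Once those two points are pinned down, the valuation estimates follow directly from Lemma \ref{lemma:auxiliary}.
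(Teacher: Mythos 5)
Your proposal is correct and follows the same route the paper takes: the paper's proof is just the one-line remark that the corollary follows from Lemma \ref{lemma:arg_lambda} (for the non-infinity disc), the observation that on the disc at infinity $\lambda_p(P(T)) = -2\log(f(T))$ with $f(T)\in T\Z_p[[T]]^{\times}$, and Lemma \ref{lemma:auxiliary} ``similarly to the proof of Lemma \ref{lemma:log}.'' You have simply unpacked that reference into its two cases, factored out the unit, split $\log(c_0) + \log(1+h(T))$, invoked Lemma \ref{lemma:auxiliary} with $\alpha\equiv 0$, and tracked the $-2/m^2$ prefactor, exactly as the authors intend.
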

    
    \begin{proof}
    In view of Lemma \ref{lemma:arg_lambda} and the considerations preceding it, the corollary follows by Lemma \ref{lemma:auxiliary}, similarly to the proof of Lemma \ref{lemma:log}.
    \end{proof}
    
    \begin{rmk}
    While we found it convenient to work out precision estimates for $\lambda_p$ using multiplication-by-$m$ on $E/\Frac(\Z_p[[t]])$, we found it computationally more convenient to use the following formula for $\lambda_p(P(T))$. Let  $E_2(E,\omega)$ be the value of the weight two Katz Eisenstein series \cite{Katz, Katzinterpolation} at the pair $(E,\omega)$ and let
    \begin{equation*}
     c = \frac{4A_2 - E_2(E,\omega)}{12}\in \Q_p, \qquad \eta = (x+c)\frac{dx}{2y}.
    \end{equation*}
    Then, setting $P_0 = P(0)$, we have
    \begin{equation}\label{eq:lambda_p_break}
        \lambda_p(P(t)) = \lambda_p(P_0) + 2\int_{P_0}^{P(t)} \omega_0\eta + 2\int_{\infty}^{P_0}\eta \cdot \int_{P_0}^{P(t)} \omega.
    \end{equation}
    The integrals involving $P(t)$ are all formal integrals; the remaining integral of $\eta$ is a Coleman integral of a differential of the second kind and can be computed using division polynomials (or the Coleman integration algorithm of \cite{BBK10}); we omit details, but formula \eqref{eq:lambda_p_break} can be derived from \cite[(4.1)]{QC0} invoking suitable height comparison results. 
    
If $p\geq 5$, we can compute $E_2(E,\omega)$ and $\lambda_p(P_0)$ using an algorithm of Harvey \cite{harvey}, which builds on one by Mazur--Stein--Tate \cite{MST} and is implemented in \texttt{SageMath} \cite{sage}. If $p=3$, we use an algorithm of Balakrishnan \cite{Bal3adic}, available at \cite{Bal3adiccode}.
    \end{rmk}

\subsection{Precision of $\tilde{\rho}$}
We can now use the considerations of \S\S \ref{subsec:prec_log}, \ref{subsec:prec_Log}, \ref{subsec:prec_lambda} to deduce lower bounds for the $p$-adic valuation of the coefficients of the expansion of $\tilde{\rho}(z)$ in a residue disc.

\begin{lemma}\label{lemma:unram}
Let $z\in X(\Q_p)$ such that $\overline{z}$ is not a ramification point of $\varphi_i$ and let $t$ be a local coordinate at $z$. Let $T_i$ be a local coordinate for $\varphi_i(z)$. Then 
\begin{equation*}
T_{i}(\varphi_i(z(t))) = t\cdot u(t) \qquad \text{for some}\quad u(t)\in \Z_p[[t]]^{\times}.
\end{equation*}
\end{lemma}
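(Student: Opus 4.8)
The plan is to compute the leading behaviour of $T_i\circ\varphi_i$ in the local coordinate $t$ directly from the explicit formulas \eqref{eq:vaphii} for $\varphi_i$, the choices of local coordinates made in \S\ref{subsec:prec_intro}, and the hypothesis that $\overline{z}$ is not a ramification point of $\varphi_i$. Since $\varphi_i$ is a morphism of smooth curves and $\overline{z}$ is an unramified point, $\varphi_i$ induces an isomorphism on completed local rings, hence on residue discs; the only content of the lemma is that the induced map of discs, in the chosen coordinates, sends the chosen uniformiser $t$ to $t\cdot u(t)$ with $u$ a \emph{unit} in $\Z_p[[t]]$ (and not merely in $\Q_p[[t]]$), i.e.\ that both the integrality and the non-vanishing of the derivative mod $p$ survive.

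First I would reduce to a case analysis according to the type of the residue disc of $z$, as in \S\ref{subsec:prec_intro}. In the generic case ($\overline z$ affine, $y(\overline z)\neq 0$, $t = x - x(z)$), one has $x(t) = x(z) + t$ and $y(t)\in\Z_p[[t]]^\times$ (the latter because $\overline{y(z)}\neq 0$). For $\varphi_1$: $x(\varphi_1(z(t))) = a_6(x(z)+t)^2$ and $y(\varphi_1(z(t))) = a_6 y(t)$, and since $\varphi_1(z)$ is then also a non-Weierstrass affine point (because $\overline{z}$ is not a ramification point, so $y(\varphi_1(\overline z)) = a_6 y(\overline z)\neq 0$ and $x(\varphi_1(\overline z))\notin\F_p$ is not a branch point), the local coordinate $T_1$ at $\varphi_1(z)$ is $x - x(\varphi_1(z)) = a_6\big((x(z)+t)^2 - x(z)^2\big) = a_6\big(2x(z)t + t^2\big) = 2a_6 x(z)\,t\,(1 + t/(2x(z)))$. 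Here $2a_6 x(z)\in\Z_p^\times$: $2\nmid p$ (odd prime), $a_6\in\Z_p^\times$ by the good reduction hypothesis $p\nmid a_0a_6$, and $x(z)\in\Z_p^\times$ precisely because $\overline z$ is not the ramification point of $\varphi_1$ lying over $x=0$. Likewise $1 + t/(2x(z))\in\Z_p[[t]]^\times$, so $u(t)$ is a unit. The computation for $\varphi_2$ is entirely analogous using $\varphi_2(x,y) = (a_0x^{-2}, a_0 y x^{-3})$, with the non-ramification hypothesis now guaranteeing $x(z)\in\Z_p^\times$ so that $x^{-2}$ and $x^{-3}$ are integral units.

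Next I would handle the remaining disc types. If $\overline z$ is Weierstrass on $X$ (so $t = y$, and $x(t)\in\Z_p[[t]]$ with $x(0) = x(z)$): the non-ramification hypothesis for $\varphi_i$ forces $\overline z$ to \emph{not} be one of the Weierstrass points fixed by the bielliptic involution, hence $x(z)\in\Z_p^\times$, and $\varphi_i(z)$ is again a Weierstrass point on $E_i$ whose local coordinate is its own $y$-coordinate; one checks $y(\varphi_1(z(t))) = a_6 y(t) = a_6 t$ and $y(\varphi_2(z(t))) = a_0 y(t) x(t)^{-3} = a_0 x(z)^{-3} t (1+O(t))$, units times $t$ as required. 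If $\overline z$ is a point at infinity on $X$ (so $t = x^{-1}$, $y(t) = \sqrt{a_6}\,t^{-3}+O(t^{-2})$): then $\varphi_1(z)$ is the point at infinity of $E_1$ — that is a \emph{ramified} point of $\varphi_1$, so this case is excluded for $i=1$; but $\varphi_2$ sends it to the affine point $(0,0)$-disc, actually to a disc of a finite point with $x$-coordinate $a_0 x^{-2} = a_0 t^2 \to 0$, and one verifies $T_2(\varphi_2(z(t))) = a_0 t^2\cdot(\text{unit})$, which is $t^2$ times a unit, not $t$ times a unit — so this case must also be excluded for $i=2$, consistent with $\infty$ being a ramification point of $\varphi_2$ as well (both maps are ramified over the respective points at infinity). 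So in all \emph{non-excluded} configurations the claimed form holds.

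\textbf{Main obstacle.} The routine part is the power-series bookkeeping; the point requiring genuine care is verifying in each disc type that the scalar multiplying $t$ is a $p$-adic \emph{unit} rather than merely nonzero — this is exactly where one must invoke both $p\nmid 2a_0a_6$ (good, odd reduction) and the precise meaning of ``$\overline z$ is not a ramification point of $\varphi_i$'' to conclude $x(z)\in\Z_p^\times$ (equivalently $\overline z$ does not reduce into the ramification locus of $\varphi_i$). A secondary subtlety is checking that the target local coordinate $T_i$ at $\varphi_i(z)$, as specified by the recipe of \S\ref{subsec:prec_intro} applied to $E_i$, really is the one appearing in the statement — in particular that $\varphi_i(z)$ lands in the disc type one expects (affine non-Weierstrass $\mapsto$ affine non-Weierstrass, Weierstrass $\mapsto$ Weierstrass), which again follows from unramifiedness of $\varphi_i$ at $\overline z$.
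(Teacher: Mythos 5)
Your approach is genuinely different from the paper's, and is mostly sound, but contains a concrete error that leaves a case unverified. The paper's proof is a two‑line conceptual argument: since $\varphi_i$ is unramified at $\overline z$ both for $X/\Q_p$ and for $X/\F_p$, the pullback $\varphi_i^*T_i$ is a uniformiser at $z$ reducing to a uniformiser modulo $p$, which is precisely the statement $T_i(\varphi_i(z(t))) = t\,u(t)$ with $u\in\Z_p[[t]]^\times$. Your proof instead runs an explicit power‑series computation in each disc type, which is fine in principle, and your identification of the ``main obstacle'' (that the leading coefficient must be a $p$‑adic \emph{unit}, forced by $p\nmid 2a_0a_6$ together with non‑ramification of the reduction) is exactly the content the paper packages as ``this applies to $X/\Q_p$ as well as $X/\F_p$.''

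The error is in your treatment of the discs at infinity. You assert that $\infty^\pm$ on $X$ are ramification points of $\varphi_1$ (``$\varphi_1(z)$ is the point at infinity of $E_1$ — that is a ramified point of $\varphi_1$, so this case is excluded for $i=1$''), and you add parenthetically that ``both maps are ramified over the respective points at infinity.'' Both claims are false. As the paper's remark immediately following Lemma \ref{lemma:ram} states, the ramification points of $\varphi_1$ are the two points with $x=0$, and only $\varphi_2$ is ramified at $\infty^\pm$. (Indeed, the covering involution of $\varphi_1$ is $(x,y)\mapsto(-x,y)$, which in coordinates $(t,s)=(1/x,y/x^3)$ near infinity is $(t,s)\mapsto(-t,-s)$, fixing neither $\infty^+$ nor $\infty^-$; by contrast, the covering involution of $\varphi_2$ is $(x,y)\mapsto(-x,-y)$, which becomes $(t,s)\mapsto(-t,s)$ and fixes both.) Consequently the case $i=1$, $z=\infty^\pm$ is \emph{not} excluded by the hypothesis and must be checked. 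It does work: with $t=1/x$ one has $T_1(\varphi_1(z(t))) = -x^2/y = -t/\sqrt{a_6}\cdot(1+O(t))$, and $\sqrt{a_6}\in\Z_p^\times$ when $\infty^\pm\in X(\Q_p)$; but this verification is absent from your argument, so as written the proof has a gap precisely in a case that the application (expanding $\tilde\rho$ in the disc of $\infty$, which involves $\varphi_1$) requires.
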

\begin{proof}
Since $\varphi_i$ is unramified at $z$, we have that $\varphi_i^{*}T_{i}$ is a uniformiser for $z$. This applies to $X/\Q_p$, as well as $X/\F_p$, so $\varphi_i^{*}T_{i}$ is a local coordinate. 
\end{proof}

\begin{lemma}\label{lemma:ram}
Let $z\in X(\Q_p)$ such that $\overline{z}$ is a ramification point for $\varphi_i$ and let $t$ be a local coordinate at $z$. Let $T_i$ be a local coordinate for $\varphi_i(z)$. Then
\begin{equation*}
T_i(\varphi_i(z(t))) = t^2\cdot u(t) \qquad \text{for some}\quad u(t)\in \Z_p[[t]]^{\times}.
\end{equation*}
\end{lemma}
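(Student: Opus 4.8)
The plan is to locate the ramification of $\varphi_1$ and $\varphi_2$ and then read off the claim from an explicit local computation. From \eqref{eq:vaphii} one sees at once that the deck transformation of $\varphi_1$ is $(x,y)\mapsto(-x,y)$ and that of $\varphi_2$ is $(x,y)\mapsto(-x,-y)$; consequently $\varphi_1$ ramifies precisely at the two points with $x$-coordinate $0$ and $\varphi_2$ precisely at $\infty^+$ and $\infty^-$, in each case with ramification index $2$ (these being the only fixed points of the respective involutions). If $\overline z$ is a ramification point of $\varphi_i$, then $\overline z\in X(\F_p)$ is the reduction of one of these four points; since $p\nmid a_0a_6$ this is only possible if $a_0$ (when $i=1$) or $a_6$ (when $i=2$) is a square in $\Z_p$, and, with $z$ chosen as in \S\ref{subsec:prec_intro}, $z$ is then that ramification point itself, not merely a point reducing to one. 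This last point matters: if $z$ were, say, an affine point with $x(z)\in p\Z_p\setminus\{0\}$, the linear coefficient of $\varphi_i^*T_i$ would be nonzero and the conclusion would fail.

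For $i=1$ I would take $z=(0,\sqrt{a_0})$ and the local coordinate $t=x$ of \S\ref{subsec:prec_intro}\ref{it:affine}, so $z(t)=(t,y(t))$ with $y(t)\in\Z_p[[t]]$ and $y(0)=\sqrt{a_0}\in\Z_p^\times$. By \eqref{eq:vaphii}, $\varphi_1(z(t))=(a_6t^2,a_6y(t))$, which at $t=0$ is the affine point $(0,a_6\sqrt{a_0})\in E_1(\Q_p)$ with nonzero $y$-coordinate; hence $T_1=x_{E_1}$ is a local coordinate at $\varphi_1(z)$ (it equals $x_{E_1}-x_{E_1}(\varphi_1(z))$, since that $x$-coordinate is $0$), and $T_1(\varphi_1(z(t)))=a_6t^2$. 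As $a_6\in\Z_p^\times$, this is $t^2u(t)$ with $u(t)=a_6$. The case $i=2$ runs parallel: with $z=\infty^+$ (say) and $t=x^{-1}$ as in \S\ref{subsec:prec_intro}\ref{it:infty}, we have $x(t)=t^{-1}$ and $y(t)=\sqrt{a_6}\,t^{-3}+O(t^{-2})$, so \eqref{eq:vaphii} gives $\varphi_2(z(t))=(a_0t^2,\,a_0\sqrt{a_6}+O(t))$; the limit $(0,a_0\sqrt{a_6})$ lies on $E_2$ and is again affine with nonzero $y$-coordinate, so $T_2=x_{E_2}$ works and $T_2(\varphi_2(z(t)))=a_0t^2=t^2\cdot a_0$ with $a_0\in\Z_p^\times$.

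Finally, the conclusion does not depend on the particular local coordinates chosen: any local coordinate at $z$ (resp.\ at $\varphi_i(z)$) reducing to a uniformiser differs from the one above by multiplication by a unit of $\Z_p[[t]]$, which preserves the shape $t^2\cdot(\text{unit of }\Z_p[[t]])$. Equivalently, and coordinate-freely: $\varphi_i$ extends to a morphism of the smooth $\Z_p$-models, so $\varphi_i^*T_i\in\Z_p[[t]]$; it vanishes to order $e_z(\varphi_i)\cdot\ord_{\varphi_i(z)}T_i=2$ over $\Q_p$; and since $p$ is odd, $\overline{\varphi_i}$ is still a separable map of degree $2$ ramified at $\overline z$, so the reduction of $\varphi_i^*T_i$ also vanishes to order exactly $2$; writing $\varphi_i^*T_i=t^2v(t)$ with $v\in\Q_p[[t]]$, these two facts force $v\in\Z_p[[t]]^\times$. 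I do not foresee a real obstacle; the only thing requiring attention is the reduction step — making sure $z$ is pinned down to an honest ramification point — which is exactly where the good-reduction hypothesis and the normalisation of \S\ref{subsec:prec_intro} enter.
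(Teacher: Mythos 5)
Your proof is correct, and at its core (the final coordinate-free paragraph) it matches the paper's one-line proof, which simply says ``The ramification index of any such point is $2$,'' leaning on the same good-reduction template as the proof of the preceding lemma for the unramified case. You go further in two useful ways. First, you make explicit a point the paper glosses over: the hypothesis is about $\overline z$, not $z$, and the conclusion would fail if $z$ were merely a $\Q_p$-point reducing into a ramification locus but not itself a ramification point (your $x(z)\in p\Z_p\setminus\{0\}$ example is exactly right, since then $\varphi_i^*T_i$ vanishes to order $1$ over $\Q_p$ but order $2$ modulo $p$, so the leading unit gets spoiled); the normalisation of \S\ref{subsec:prec_intro} together with $p\nmid a_0a_6$ is what pins $z$ to an honest ramification point, and it is good to see that dependency spelled out. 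Second, you add a concrete verification via \eqref{eq:vaphii} which, while not needed once one accepts the abstract argument on ramification indices over $\Z_p$, is a useful sanity check and also exhibits the unit $u(t)$ explicitly as $a_6$ (resp.\ $a_0$) modulo the ambiguity in the choice of $T_i$. The only small caveat is that your explicit computation fixes a particular normalised $T_i$, so the closing remark on coordinate-independence (any two local coordinates reducing to uniformisers differ by a $\Z_p[[t]]$-unit) is needed to recover the full statement; you do include it, so nothing is missing.
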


\begin{proof}
The ramification index of any such point is $2$.
\end{proof}

\begin{rmk}
The ramification points of $\varphi_1$ are those satisfying $x = 0$; the ramification points of $\varphi_2$ are the points at infinity.
\end{rmk}

\begin{proposition}\label{prop:conclusions}
Let $t$ be a local coordinate at a point $z\in X(\Q_p)$ as in \S \ref{subsec:prec_intro} and let
\begin{equation*}
\epsilon = \min\{ \ord_p(\alpha_1),-2\ord_p(\#E_1(\F_p)), \ord_p(\alpha_2),-2\ord_p(\#E_2(\F_p))\}. 
\end{equation*}
Then, under
Assumption \ref{ass:ordinary} on ordinarity,
\begin{equation*}
\tilde{\rho}(z(t)) = \sum_{n=0}^{\infty} C_n t^n\in \Q_p[[t]]
\end{equation*}
with $\ord_p(C_0), \ord_p(C_1)\geq \epsilon$ and, for all $n\geq 2$, $\ord_p(C_n)\geq -\lfloor{\log_p(n-1)\rfloor}-\ord_p(n) + \epsilon$. Moreover, if $p\nmid \# E_1(\F_p)\cdot \# E_2(\F_p)$, then $\ord_p(C_0)\geq 1+\epsilon$.   
\end{proposition}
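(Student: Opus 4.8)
The plan is to expand each of the five summands of $\rho$ from \eqref{eq:rho} in the chosen local coordinate $t$ at $z$, feed in the single-term estimates of \S\S\ref{subsec:prec_log}--\ref{subsec:prec_lambda}, use that the logarithmic singularities of $\lambda_p(\varphi_1(\,\cdot\,))$, $-\lambda_p(\varphi_2(\,\cdot\,))$ and $-2\log(x(\,\cdot\,))$ cancel against one another, and then read off the coefficient bounds for $\tilde{\rho}(z(t))$ by taking minima. I would organise the argument into three cases for the residue disc of $z$, guided by the ramification analysis of Lemmas \ref{lemma:unram}--\ref{lemma:ram}: since the ramification locus $\{x=0\}$ of $\varphi_1$ is disjoint from the ramification locus $\{\infty^{\pm}\}$ of $\varphi_2$, exactly one of the following holds. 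In case (1), $\overline z$ is a ramification point of neither map, so by \S\ref{subsec:prec_intro} we have $x(z)\in\Z_p^{\times}$ and $x(z(t))\in\Z_p[[t]]^{\times}$. In case (2), $\overline z$ is a ramification point of $\varphi_1$ only, so $x(z)=0$, $t=x$, and $\varphi_2(z)=\infty$. In case (3), $\overline z$ is a ramification point of $\varphi_2$ only, so $z\in\{\infty^{\pm}\}$, $t=x^{-1}$, and $\varphi_1(z)=\infty$.

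In case (1) each $\varphi_i(z)$ is an integral point of $E_i$ not reducing to $\infty$, and $\varphi_i$ is unramified at $z$, so Lemma \ref{lemma:unram} gives $T_i(\varphi_i(z(t)))=t\,u_i(t)$ with $u_i\in\Z_p[[t]]^{\times}$, for a local coordinate $T_i$ at $\varphi_i(z)$. Substituting this into the expansions of Corollary \ref{cor:lambda} (in the case $\delta=0$) and of Lemma \ref{lemma:Log}, invoking Lemma \ref{lemma:auxiliary} with the weakly decreasing comparison functions $n\mapsto -2\ord_p(m_i)$ and $n\mapsto -\lfloor\log_p(\max\{n-1,1\})\rfloor$ respectively, treating $-2\log(x(z(t)))$ directly by the third case of Lemma \ref{lemma:log}, and multiplying the two $\Log^2$-contributions by $\alpha_i$, I obtain that each of the five summands has a $t$-expansion whose $n$-th coefficient has valuation at least $\epsilon$ for $n\le1$ and at least $-\ord_p(n)-\lfloor\log_p(\max\{n-1,1\})\rfloor+\epsilon$ for $n\ge2$. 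The uniformity packaged in $\epsilon$ comes from the fact that the order $m_i$ of the reduction of $\varphi_i(z)$ in $E_i(\F_p)$ divides $\#E_i(\F_p)$, so that $-2\ord_p(m_i)\ge -2\ord_p(\#E_i(\F_p))\ge\epsilon$, together with $\epsilon\le0$. Summing the five expansions gives the stated bounds. For the refined bound when $p\nmid\#E_1(\F_p)\#E_2(\F_p)$, I would note that then $\ord_p(m_i)=0$, so Corollary \ref{cor:lambda} gives $\ord_p(C_0)\ge1$ for the $\lambda_p$-terms, the third case of Lemma \ref{lemma:log} gives $\ord_p(C_0)\ge1$ for the $\log$-term, and the ``moreover'' clause of Lemma \ref{lemma:Log} gives $\ord_p(C_0)\ge2$, hence $\ge 2+\ord_p(\alpha_i)\ge1+\epsilon$, for the $\Log^2$-terms.

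Cases (2) and (3) would be treated the same way, the only genuinely new point being the logarithmic cancellation; I describe case (2), case (3) being entirely analogous with $\varphi_1$ and $\varphi_2$ interchanged and $x(z(t))=t^{-1}$. Here $\varphi_1(z)=(0,a_6y(z))$ is an integral affine point with $y(z)^2=a_0\in\Z_p^{\times}$, hence $y(z)\in\Z_p^{\times}$, and $\varphi_1$ is ramified at $z$, so Lemma \ref{lemma:ram} gives $T_1(\varphi_1(z(t)))=t^2u(t)$; the $\varphi_1$-contributions are then bounded just as in case (1), with $t^2u(t)$ in place of $t$, which only improves the estimates. On the other hand $\varphi_2(z)=\infty$, and $\varphi_2$ is unramified at $z$, so by Lemma \ref{lemma:unram} the pullback of the standard uniformiser at $\infty$ is $t\,u(t)$ with $u\in\Z_p[[t]]^{\times}$, and Corollary \ref{cor:lambda} (in the case $\delta=1$, where the order of the reduction is $1$) gives $\lambda_p(\varphi_2(z(t)))=-2\log t-2\log u(t)+g(t)$ with $\ord_p(g_0)\ge1$ and $\ord_p(g_n)\ge-\ord_p(n)$. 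Since $x(z(t))=t$ in this case,
\[
-\lambda_p(\varphi_2(z(t)))-2\log(x(z(t)))=2\log u(t)-g(t)
\]
is a genuine power series, whose constant coefficient has valuation $\ge1$ — because $u(0)\in\Z_p^{\times}$ forces $\ord_p(\log u(0))\ge1$ — and whose $n$-th coefficient has valuation $\ge-\ord_p(n)$. Adding the remaining summand $\alpha_2\Log^2(\varphi_2(z(t)))$, bounded via Lemma \ref{lemma:Log} at the point at infinity and Lemma \ref{lemma:auxiliary}, and the $\varphi_1$-contributions, gives the same bounds as in case (1), the refined one included.

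I expect the main obstacle to be precisely this logarithmic cancellation in cases (2) and (3): the rest is a careful but mechanical application of the already-established Lemmas \ref{lemma:auxiliary}, \ref{lemma:log}, \ref{lemma:Log}, \ref{lemma:unram}, \ref{lemma:ram} and Corollary \ref{cor:lambda}, the only real risk being an overlooked disc or a mishandled sign. The structural fact underlying the cancellation, which I would record as a preliminary observation, is that whenever $\varphi_i(z)=\infty$ the map $\varphi_i$ is automatically unramified at $z$ (the fibre of $\varphi_i$ over $\infty$ is disjoint from its ramification locus), so the singular part $-2\log$ of $\lambda_p(\varphi_i(z(t)))$ contributes exactly $-2\log t$ plus a power series with constant coefficient of valuation $\ge1$ — precisely what is needed to absorb $-2\log(x(z(t)))=\mp 2\log t$. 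A secondary point to keep track of is the passage from $\ord_p(m_i)$ to $\ord_p(\#E_i(\F_p))$, which makes the bounds uniform over all residue discs and explains the shape of $\epsilon$.
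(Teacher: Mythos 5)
Your proof is correct and takes essentially the same approach as the paper, whose own proof of Proposition~\ref{prop:conclusions} is a one-line citation of exactly the lemmas you invoke (Lemmas~\ref{lemma:auxiliary}, \ref{lemma:log}, \ref{lemma:Log}, \ref{lemma:unram}, \ref{lemma:ram} and Corollary~\ref{cor:lambda}). Your three-case decomposition by ramification type and your explicit verification of the logarithmic cancellation (using that $\varphi_i$ is unramified over $\infty$, so Lemma~\ref{lemma:unram} gives the pullback of the uniformiser as $t\,u(t)$ with $u\in\Z_p[[t]]^{\times}$, whence $-2\log(tu(t))$ absorbs $\mp 2\log t$) are a faithful unpacking of what the paper leaves implicit.
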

\begin{proof}
This follows from Lemmas \ref{lemma:log}, \ref{lemma:Log}, Corollary \ref{cor:lambda} and Lemmas \ref{lemma:unram}, \ref{lemma:ram}, in view of Lemma \ref{lemma:auxiliary}.
\end{proof}
 
Let $M\geq 2$ be an integer and let $\tilde{\rho}_M(t)\in \Q_p[t]$ be a polynomial of degree less than $M$ such that
\begin{equation*}
\tilde{\rho}_M(t) - \tilde{\rho}(z(t)) = O(t^{M}).
\end{equation*}

\begin{lemma} Let $N =M- \lfloor{\log_p(M-1)\rfloor}-\lfloor{\log_p(M)\rfloor}+ \epsilon$. Then
\begin{equation*}
\tilde{\rho}_M(pt) - \tilde{\rho}(z(pt)) = O(p^{N}).
\end{equation*}
\end{lemma}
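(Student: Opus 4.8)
The plan is to write the difference as the tail of the Taylor expansion of $\tilde\rho$ and estimate it term by term using Proposition \ref{prop:conclusions}. Writing $\tilde\rho(z(t)) = \sum_{n\ge 0} C_n t^n$, the polynomial $\tilde\rho_M$ of degree $<M$ with $\tilde\rho_M(t) - \tilde\rho(z(t)) = O(t^M)$ is forced to be the truncation $\sum_{n<M} C_n t^n$ (any other such polynomial would differ from it by a polynomial of degree $<M$ vanishing to order $M$, hence zero), so
\[
\tilde\rho_M(pt) - \tilde\rho(z(pt)) = -\sum_{n\ge M} C_n\, p^n t^n .
\]
Since $M\ge 2$, every $n\ge M$ satisfies $n\ge 2$, so Proposition \ref{prop:conclusions} gives $\ord_p(C_n) \ge -\lfloor\log_p(n-1)\rfloor - \ord_p(n) + \epsilon$; hence the $t^n$-coefficient above has valuation at least $n - \lfloor\log_p(n-1)\rfloor - \ord_p(n) + \epsilon$. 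It therefore suffices to show this quantity is $\ge N$ for every $n\ge M$, for then each coefficient of the tail lies in $p^N\Z_p$ and evaluating at $t\in\Z_p$ yields an element of $p^N\Z_p$.

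To obtain the required inequality I would first replace $\ord_p(n)$ by $\lfloor\log_p n\rfloor$: since $p^{\ord_p(n)}\mid n$ we have $p^{\ord_p(n)}\le n$, so $\ord_p(n)\le\log_p n$ and, being an integer, $\ord_p(n)\le\lfloor\log_p n\rfloor$. Thus it is enough to prove that $h(n) := n - \lfloor\log_p(n-1)\rfloor - \lfloor\log_p n\rfloor$ satisfies $h(n)\ge h(M)$ for all $n\ge M\ge 2$, because $h(M) + \epsilon = N$. Recall that $\lfloor\log_p\cdot\rfloor$ takes the value $j$ on each interval $[p^j, p^{j+1})$, so for $k\ge 2$ the increment $\lfloor\log_p k\rfloor - \lfloor\log_p(k-1)\rfloor$ is $0$ or $1$, and equals $1$ exactly when $k$ is a power of $p$. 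As no two consecutive integers $\ge 2$ can both be powers of $p$, for $n\ge 2$ at most one of the increments at $n$ and at $n+1$ is $1$, whence
\[
h(n+1) - h(n) = 1 - \big(\lfloor\log_p(n+1)\rfloor - \lfloor\log_p n\rfloor\big) - \big(\lfloor\log_p n\rfloor - \lfloor\log_p(n-1)\rfloor\big) \ge 0 .
\]
So $h$ is non-decreasing on integers $\ge 2$, giving $h(n)\ge h(M)$ and completing the argument.

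The computation is essentially routine and there is no serious obstacle. The only point needing a little care is the monotonicity of $h$, i.e.\ controlling the two floor-of-logarithm increments at once, which is where the elementary observation about consecutive powers of $p$ comes in; everything else is bookkeeping with valuations and the substitution $t\mapsto pt$.
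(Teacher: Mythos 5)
Your proof is correct and follows the same route as the paper: bound the tail coefficients via Proposition~\ref{prop:conclusions}, replace $\ord_p(n)$ by $\lfloor\log_p n\rfloor$, and invoke monotonicity of $n-\lfloor\log_p(n-1)\rfloor-\lfloor\log_p n\rfloor$ on integers $n\geq 2$. The paper simply asserts this monotonicity; you supply the (correct) justification via the observation that no two consecutive integers $\geq 2$ are both powers of $p$, so at most one of the two floor-of-log increments at $n$ and $n+1$ equals $1$.
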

\begin{proof}
Let $n\geq M$. By Proposition \ref{prop:conclusions}, the coefficient of the term of degree $n$ in $\tilde{\rho}(z(pt))$ has valuation bounded from below by
\begin{equation*}
n -\lfloor{\log_p(n-1)\rfloor}-\ord_p(n) + \epsilon\geq n -\lfloor{\log_p(n-1)\rfloor}-\lfloor{\log_p(n)\rfloor} + \epsilon.
\end{equation*}
The right hand side of this inequality is (not necessarily strictly) increasing for $n$ an integer $\geq 2$.
\end{proof}

Let $w\in \Omega$. Let $k$ be the minimal valuation of a coefficient of $\tilde{\rho}_M(pt)-w$. If $k< N$, then $\tilde{\rho}_M(pt)-w$ is non-zero modulo $N$. If $p^{-k}(\tilde{\rho}_M(pt)-w)$ has a zero in $\Z/p^{N-k}\Z$ whose derivative is non-zero modulo $p^{\lceil (N-k)/2\rceil}$, then by Hensel's lemma it lifts uniquely to a zero of $\tilde{\rho}(z(pt))-w$ in $\Z_p$ (we use \cite[Theorem 8.2 (1)]{conrad:hensel} to determine to which precision we know the lift).

Furthermore, any zero in $\Z_p$ of $\tilde{\rho}(z(pt))$ reduces modulo $p^{N-k}$ to a zero of $p^{-k}\tilde{\rho}_M(pt)$. When it comes to zeros not corresponding to known rational points, the uniqueness of the lifting is not so important. Indeed, suppose that we found a root modulo $p^{N^{\prime}}$ that could or could not lift (perhaps not uniquely), and that we suspect does not correspond to a rational point of $X$. Then in the Mordell--Weil sieve step (\S\S \ref{subsec:step2}, \ref{subsec:step3}) we will try to show that such a root cannot possibly correspond to a point in $X(\Q)$.

On the other hand, it is important to show that the zeros corresponding to our known rational points are isolated, so that we can discard such roots at once before the Mordell--Weil sieve step. Because of the extra automorphisms $X$ possesses, we actually expect some of these to be non-simple. 

\begin{prop}\label{prop:double_roots}
Let $z\in X(\Q)$ such that $x(z) = 0$, or $y(z) = 0$, or $z$ is a point at infinity and let $t$ be the local coordinate at $z$ of \S \ref{subsec:prec_intro}. Let $w\in \Omega$ such that $\tilde{\rho}(z) = w$. Then
\begin{equation*}
\tilde{\rho}(z(t)) - w = t^2f(t) \qquad \text{for some } f(t)\in \Q_p[[t]]. 
\end{equation*}
\end{prop}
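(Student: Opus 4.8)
The plan is to show that the claimed vanishing to order two comes from the interplay between the parity of the ramification of $\varphi_i$ at $z$ and the automorphism $x \mapsto -x$ of $X$. I would treat the three types of special points separately, but in each case the strategy is the same: write $\tilde\rho(z(t)) - w$ as a sum of terms, show that each term is an even power series in the local coordinate $t$ (i.e. lies in $\Q_p[[t^2]]$), and conclude that the linear coefficient $C_1$ vanishes; since $C_0 = \tilde\rho(z) - w = 0$ by hypothesis, this gives $\tilde\rho(z(t)) - w = t^2 f(t)$.

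First consider $z$ with $y(z) = 0$ (a Weierstrass point). Here $t = y$ and $x(t) \in \Z_p[[t]]$ satisfies $F(x(t)) = t^2$, where $F(x) = a_6x^6 + a_4x^4 + a_2x^2 + a_0$. Since $F$ is even in $x$, both $x(t)$ and $-x(t)$ solve this; by uniqueness $x(t)$ is an even power series in $t$. Now each term of $\rho$ in \eqref{eq:rho} is, after composing with $\varphi_1$ or $\varphi_2$, a function of $x(t)$ and $y(t) = t$ only through $x(t)^2$, $a_6x(t)^2$, $a_0 x(t)^{-2}$ and the combinations $a_6 y\, $, $a_0 y x^{-3}$ — and a quick inspection of \eqref{eq:vaphii} shows $\varphi_i(x(t), t) = \varphi_i(-x(t), -t)$, so $\Log^2(\varphi_i(z(t)))$, $\lambda_p(\varphi_i(z(t)))$ and $\log(x(t))$ (via Lemma \ref{lemma:log}, since $x(z) \notin \{0,\infty\}$ at a Weierstrass point with $F(0) = a_0 \neq 0$ generically — if $a_0 = 0$ this point is handled by the $x(z)=0$ case) are all even in $t$. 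Hence $\tilde\rho(z(t))$ is even, $C_1 = 0$, and we are done in this case.

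For $z$ with $x(z) = 0$, we use $t = x$ and $y(t) = \sqrt{a_0} + O(t) \in \Z_p[[t]]$ the solution with $y(0) = y(z)$. Then $\varphi_1(z(t)) = (a_6 t^2, a_6 y(t))$ is visibly a power series in $t$, and $\lambda_p(\varphi_1(z(t)))$, $\Log^2(\varphi_1(z(t)))$ are honest power series. The remaining terms $-\lambda_p(\varphi_2(z(t))) - 2\log(x(t)) + \alpha_2 \Log^2(\varphi_2(z(t)))$ individually have logarithmic singularities, but the computation in the proof of Theorem \ref{thm:main} (the $x(z) = 0$ case) shows their sum equals $-\log(a_0) + O(t)$; I would rerun that computation keeping track of the $t$-expansion to leading order, using Proposition \ref{prop:prop_lambdap}\ref{prop:prop_lambdap_log_term} which gives $\lambda_p$ in the disc of infinity as $-2\log(T) + O(T)$ with $T = -X/Y$. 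The key point is that $\varphi_2(z(t)) = (a_0 t^{-2}, a_0 y(t) t^{-3})$, so $T(\varphi_2(z(t))) = -X/Y = t/y(t) \cdot (1 + O(t))$... more precisely $-X/Y$ evaluated here is $-\,(a_0 t^{-2})/(a_0 y(t) t^{-3}) = -t/y(t)$, which is $t \cdot u(t)$ with $u \in \Z_p[[t]]^\times$, so $\lambda_p(\varphi_2(z(t))) + 2\log(x(t)) = -2\log(-t/y(t)) + O(t) + 2\log t = 2\log(y(t)) + O(t)$, visibly a power series — but the subtler claim is that $\tilde\rho(z(t)) - w$ has no linear term. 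For this I would invoke the automorphism argument: $z(-t) = (-t, y'(t))$ with $y'(t)^2 = F(-t) = F(t)$, so the disc parametrized by $-t$ is related to that by $t$ by $(x,y) \mapsto (-x, y)$, under which $\varphi_1(x,y) = (a_6 x^2, a_6 y)$ and $\varphi_2(x,y) = (a_0 x^{-2}, a_0 y x^{-3})$ are both invariant; combined with $|x(z(t))|$ depending only on $|t|$, the full $\tilde\rho(z(t))$ is even in $t$, forcing $C_1 = 0$.

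For $z = \infty^{\pm}$ a point at infinity, the argument mirrors the previous case with $t = x^{-1}$, $y(t) = \sqrt{a_6} t^{-3} + O(t^{-2})$, using the $\infty^{\pm}$ computation from the proof of Theorem \ref{thm:main}: there $\varphi_1(z(t)) = (a_6 t^{-2}, \ldots)$ reduces to infinity on $E_1$ with local coordinate $-X/Y = t \cdot u(t)$, so $\lambda_p(\varphi_1(z(t))) - 2\log(x(t)) = \log(a_6) + O(t)$ is a power series, while $\varphi_2(z(t))$ stays in the affine part and contributes honest power series. Again the even-ness of $\tilde\rho(z(t))$ under $t \mapsto -t$ (which on $X$ is the bielliptic-type involution $(x,y) \mapsto (-x,-y)$ extended to the points at infinity, under which $\varphi_i$ are equivariant in the appropriate sense and $|x(z(t))|$ is unchanged) kills the linear term.

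\textbf{The main obstacle.} The genuinely delicate step is the $x(z) = 0$ and $z = \infty^{\pm}$ cases, where $\tilde\rho$ is not a sum of individually-defined power series and one must argue with the \emph{combined} expression. The cleanest route is the symmetry argument — identifying the correct automorphism of $X$ (namely $\iota: (x,y) \mapsto (-x, \pm y)$, which is precisely the involution exchanging the two sheets structure and satisfies $\varphi_i \circ \iota = \varphi_i$ up to the elliptic involution, together with $\tilde\rho \circ \iota = \tilde\rho$ since $\tilde\rho$ is expressed through $\lambda_p \circ \varphi_i$, $\Log^2 \circ \varphi_i$ and $\log|x|$, all $\iota$-invariant) and checking that in the chosen local coordinate $t$ at a fixed point of $\iota$, the action of $\iota$ is $t \mapsto -t$. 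This last verification — that $t = x$ (resp. $t = x^{-1}$) is anti-invariant under $\iota$ — is immediate from the formulas, and then $\tilde\rho(z(t)) = \tilde\rho(\iota(z(t))) = \tilde\rho(z(-t))$ gives evenness, hence $C_1 = 0$, hence $\tilde\rho(z(t)) - w = C_0 + C_2 t^2 + \cdots = t^2 f(t)$ as $C_0 = 0$.
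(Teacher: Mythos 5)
Your final paragraph arrives at the paper's actual proof: the automorphism $\theta$ of $X$ fixing $z$ satisfies $\theta^{*}t = -t$, and $\tilde{\rho}\circ\theta = \tilde{\rho}$ because $\varphi_i\circ\theta$ equals either $\varphi_i$ or $[-1]\circ\varphi_i$ while $\lambda_q$ and $\Log^2$ are invariant under $[-1]$ on $E_i$; evenness of $\tilde{\rho}(z(t))$ together with the hypothesis $\tilde{\rho}(z)=w$ then gives the claim. The paper runs this uniformly for all three types of fixed points, so the preliminary term-by-term case analysis you attempt is superfluous, and it contains two slips worth flagging. First, at a Weierstrass point the fixing automorphism is the hyperelliptic involution $(x,y)\mapsto(x,-y)$, not $(x,y)\mapsto(-x,\pm y)$ as your closing summary suggests; with $t=y$ one still gets $\theta^{*}t=-t$, but this case must be listed alongside the other two. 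Second, the identity $\varphi_i(x(t),t)=\varphi_i(-x(t),-t)$ you invoke in the Weierstrass computation is false: from \eqref{eq:vaphii} one has $\varphi_1(-x,-y)=[-1]\varphi_1(x,y)$, the sign appearing on the $y$-coordinate of $E_1$ (and similarly $\varphi_i\circ\theta=[-1]\circ\varphi_i$ for the hyperelliptic involution). What rescues the conclusion is precisely the $[-1]$-invariance of $\lambda_q$ and $\Log^2$, not invariance of $\varphi_i$ itself --- which is what your final paragraph and the paper's proof correctly use.
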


\begin{proof}
The point $z$ is fixed by the hyperelliptic involution or one of the automorphisms $(x,y)\mapsto (-x,y)$ and $(x,y)\mapsto (-x,-y)$. Let $\theta$ be the automorphism fixing $z$. Then $\rho(\theta(z)) = \rho(z)$, since upon noticing that $\varphi_i\circ\theta$ is either the identity or multiplication by $-1$, we see that each of the terms making up $\tilde{\rho}(z)$ is invariant under $\theta$. Now, in view of our choice of $t$, we have $\theta^{*}t = -t$. Therefore,
\begin{equation*}
\tilde{\rho}(z(t)) =  \tilde{\rho}(z(-t)),
\end{equation*}
and hence $\tilde{\rho}(z(t))$ has trivial coefficient of $t^{2k+1}$ for all non-negative $k$. Finally since $\tilde{\rho}(0) = w$, the proposition follows.
\end{proof}

Our strategy is then the following:
\begin{enumerate}[label=(\roman*)]
    \item\label{it:double_root_auto} If $\tilde{\rho}_M(pt)-w = O(t^2)$, then we verify that we are in the situation of Proposition \ref{prop:double_roots}. We do this by checking that the coefficient of $t^2$ is non-zero, and that $t=0$ corresponds to a point at infinity, or with vanishing $x$- or $y$-coordinate. 
    \item\label{it:lift_with_Hensel} In the other cases, we check that the recovered roots can be lifted uniquely using Hensel's lemma.
    \item We return an error if a root does not meet the conditions of \ref{it:double_root_auto} or \ref{it:lift_with_Hensel}.
\end{enumerate}

\section{Computations}\label{sec:computations}
We now explain how we used the results of Sections \ref{sec:QC} and \ref{sec:prec_an} together with the Mordell--Weil sieve to prove Theorem \ref{thm:db}, that is, to compute the rational points on a database $\Delta$ of genus $2$ curves over $\Q$ that have points everywhere locally, are bielliptic and have a rank $2$ Jacobian.  

We describe three main steps in our implementation. First, we explain in \S \ref{subsec:step1} how we obtained the dataset $\Delta$, and how we computed, for each curve $X\in \Delta$, various inputs for the quadratic Chabauty and Mordell--Weil sieve computations. For instance, we need to choose a set of primes with respect to which to apply the quadratic Chabauty technique described in the previous sections. For a given prime, we discussed in Section \ref{sec:prec_an} the theoretical results needed for an implementation of the quadratic Chabauty method. Therefore, we only explain here (\S \ref{subsec:step2}) how to turn the output of this computation into an input for a Mordell--Weil sieve. Finally in \S \ref{subsec:step3}, we explain how to use the Mordell--Weil sieve, and, in particular, its \texttt{Magma} \cite{magma} implementation available at \cite{QCMagma}, to complete the determination of the rational points on $X$.

The remaining \S \ref{subsec:pointless} and \S \ref{subsec:sharp} concern some exceptional curves in our database. 
\subsection{Step 1: The database and some preliminary computations}\label{subsec:step1} All the computations of this step were performed in \texttt{Magma}\cite{magma} (see \cite{RationalPts}).
Recall that our database contains $413$ curves in total, $411$ of which belong to the database \cite{genus2curvedatabase}, available on the LMFDB \cite{lmfdb}. The latter 411 curves were extracted from the 66,158 genus $2$ curves of the LMFDB as follows.  

First, we are interested in curves that are locally solvable, and we can filter the LMFDB search to return such curves only. Secondly, a genus $2$ curve $X$ over $\Q$ is bielliptic (over $\Q$) if and only if the $\Q$-automorphism group of $X$ has a subgroup isomorphic to $\Z/2\Z\times \Z/2\Z$. Finally, if $X$ is bielliptic, there exist elliptic curves $E_1$ and $E_2$ such that the Jacobian $J$ of $X$ is (Richelot) isogenous to $E_1\times E_2$ \cite{Richelot_36,Richelot_37,cassels_flynn,Smith_thesis}; therefore, the Mordell--Weil rank of $J$ is equal to the sum of the ranks $r_1$ and $r_2$ of $E_1$ and $E_2$. We use this to identify the curves with a rank $2$ Jacobian. 

If $r_1$ or $r_2$ is equal to zero, we may apply elementary methods to determine $X(\Q)$. Therefore, we hereafter restrict our attention to the $354$ curves $X\in \Delta$ for which $r_1 = r_2 = 1$.
For each such $X$, we compute the following data.
\subsubsection*{A ``bielliptic" model} The equation for $X$ as given in the LMFDB is of the form $y^2 + f_1(x)y = f_2(x)$, for some $f_1(x),f_2(x)\in\Z[x]$ of degrees at most $3$ and $6$, respectively. In order to apply Theorem \ref{thm:main}, we need to find a \textit{bielliptic} model of the form $y^2 = a_6x^6 + a_4x^4 + a_2 x^2 + a_0\in \Z[x]$, or, equivalently, models for $E_1$ and $E_2$ of the form
\begin{align*}
E_1\colon y^2 = x^3 + a_4x^2 + a_2a_6x + a_0a_6^2,\\
E_2 \colon y^2 = x^3 + a_2x^2 + a_4a_0x + a_6 a_0^2.
\end{align*}
Equations for $E_1$ and $E_2$ of this form (though not necessarily integral) are computed internally by \texttt{Magma}'s function \texttt{RichelotIsogenousSurfaces}. We use this to compute an integral bielliptic model for $X$.
\subsubsection*{A candidate list of rational points} Let $X(\Q)_{\known}$ be the set of rational points of $X$ such that the naive height of the $x$-coordinate, with respect to the bielliptic model from above, is less than $10^4$. We compute $X(\Q)_{\known}$ using the \texttt{Magma} function \texttt{RationalPoints}. The ultimate goal of our computation will be to prove that $X(\Q) = X(\Q)_{\known}$.

\subsubsection*{Generators of $J(\Q)$} In order to apply Theorem \ref{thm:main}, we only need to know a point of infinite order on each of $E_1$ and $E_2$. However, for the subsequent Mordell--Weil sieve step, we assume that we know generators for the full Mordell--Weil group $J(\Q)$. The \texttt{Magma} function \texttt{MordellWeilGroupGenus2}, implemented by Stoll, successfully determined these for every curve. We denote by $B_1$ and $B_2$ generators for $J(\Q)/J(\Q)_{\tors}$. 

\subsubsection*{A set of primes for quadratic Chabauty} In order to apply Theorem \ref{thm:main}, we need to pick a prime $p$ of good reduction. In addition, in the precision estimates of \S \ref{subsec:prec_lambda}, we assumed  that each of $E_1$ and $E_2$ has ordinary reduction at $p$. In general, it is expected that Theorem \ref{thm:main} will not suffice by itself to determine $X(\Q)$, since it will only return a $p$-adic approximation of a superset of $X(\Q)$.  The strategy that we will describe in detail in the next steps entails using the Mordell--Weil group $J(\Q)$ and reduction maps to $J(\F_{\ell})$, for various primes $\ell$, to refine the superset and prove that $X(\Q) = X(\Q)_{\known}$.  This method is often more likely to succeed if the quadratic Chabauty computation is performed for more than one prime $p$. 
We therefore compute, for each curve, the three smallest primes $p_1,p_2,p_3$ of good ordinary reduction. See Remark \ref{rmk:choosing_qc_primes} for possible improvements in the choice of primes.

\subsection{Step 2: Extra points in bielliptic quadratic Chabauty}\label{subsec:step2}
In the following discussion we assume that our curve has at least one known $\Q$-rational point $b$. While some modification of this would be applicable in the other case too, we decided to treat curves with no known rational points separately: see \S \ref{subsec:pointless}. Our code for this stage of the computation is written in \texttt{SageMath} \cite{sage} and is available at \cite{OurCode}.

We perform the bielliptic quadratic Chabauty algorithm on $X$ for the three primes $p_1,p_2,p_3$. 
Fix $i\in\{1,2,3\}$. Using the precision estimates of Section \ref{sec:prec_an}, we compute the set
\begin{equation*}
A_i = \{z\in X(\Q_{p_i}):\tilde{\rho}_i(z) \in \Omega_i\},
\end{equation*}
where $\tilde{\rho}_i(z)$ and $\Omega_i$ are the function and set from Theorem \ref{thm:main}, respectively, for the prime $p_i$.
By Theorem \ref{thm:main}, the set $A_i$ contains $X(\Q)$; it may or may not contain other points in $X(\Q_{p_i})$. Let $A_{\text{extra},i }$ be the set of $p_i$-adic points in $A_i$ which have not been recognised as points in $X(\Q)_{\known}$. The points in $A_{\text{extra},i}$ are only known modulo $p_i^{m_i}$, for some integer $m_i$ depending on our chosen working precision. 
If $A_{\text{extra},i}\neq \emptyset$, our strategy to \emph{prove} that the points in $A_{\text{extra},i}$ are not reductions modulo $p_i^{m_i}$ of points in $X(\Q)$ is to feed them into the Mordell--Weil sieve (described in \S \ref{subsec:step3}), following the strategy outlined in \cite[Sections 5-7]{BBM17}. We describe here the preliminary step (which essentially amounts to applying Section 6 of \emph{loc.\ cit.} to our setting).

Let $\iota$ be the Abel--Jacobi map on $X(\Q)$ with respect to $b$:
\begin{equation*}
\iota \colon X(\Q) \xhookrightarrow{} J(\Q), \qquad P \mapsto [P-b].
\end{equation*}
Let $P_i\in A_{\text{extra},i}$ be one of our extra points and assume for the sake of contradiction that $P_i$ corresponds to a point $P\in X(\Q)$. Then there exist integers $a_1,a_2 \in \Z$ and a torsion point $T\in J(\Q)$ such that
\begin{equation}\label{eq:coeff_iota_P}
\iota(P) = a_1 B_1 + a_2 B_2 + T.
\end{equation}
We can compute $a_1,a_2$ modulo $p_i^{n_i}$, for some $n_i\leq m_i$, by noting that, for each $j\in\{1,2\}$, we have 
\begin{equation*}
\Log(\varphi_j(P)) - \Log(\varphi_j(b)) = a_1 \Log(\varphi_{j,*}(B_1)) + a_2\Log(\varphi_{j,*}(B_2)).
\end{equation*}

Therefore, for every $P_i\in A_{\text{extra},i}$, we obtain at most $\#J(\Q)_{\tors}$ possibilities for the image of $\iota(P)$ in $J(\Q)/p_i^{n_i}J(\Q)$, under our running assumption that $P_i$ corresponds to a point $P\in X(\Q)$.

The goal of the sieve is to show that such cosets in $J(\Q)/p_i^{n_i}  J(\Q)$ cannot arise from points in $X(\Q)$ by considering reduction maps modulo several primes.  

We can consider more than one quadratic Chabauty prime in $\{p_1,p_2,p_3\}$ as follows. Suppose $k\neq i$. Then there must also exist $P_k\in A_{\text{extra}, k}$ such that $P$ reduces modulo $p_k^{m_k}$ to $P_k$. Using the Chinese remainder theorem, we then obtain cosets of $p_i^{n_i}p_k^{n_k}J(\Q)$ in $J(\Q)$ that we want to eliminate in the Mordell--Weil sieve step.

We refine this strategy by noticing that some elements in $A_{\text{extra}, k}$ may be ruled out from corresponding to $P$ in the following way.
First note that elements of $\Omega_i$ are sums over the bad primes $q$ of $\Q$-rational multiples of $\log_i(q)$, where $\log_i$ is the $p_i$-adic logarithm, and that for $k\neq i$ the set $\Omega_k\subset \Q_{p_k}$ can be obtained from $\Omega_i\subset \Q_{p_i}$ just by replacing each occurrence of $\log_i$ with $\log_k$. Moreover, if $P_i\in A_{i}$ and $P_k\in A_{k}$ both correspond to $P\in X(\Q)$, we must have
\begin{equation*}
\tilde{\rho}_i(P) = \sum_{q\: \text{bad}}v_q\log_i(q),\qquad  \tilde{\rho}_k(P) = \sum_{q\: \text{bad}}v_q\log_k(q),
\end{equation*}
where, for every $q$, the number $v_q$ is rational (the same one for $i$ and $k$). It follows that we only need to compare points in $A_{\text{extra},i}$ and $A_{\text{extra},k}$ corresponding to compatible elements in $\Omega_i$ and $\Omega_k$.

Since $A_i$ is closed under the hyperelliptic involution and the automorphisms $(x,y)\mapsto (-x,\pm y)$ of $X$, for one of the three primes it suffices to compute $A_i$ modulo automorphisms. We do so for $p_1$.

\subsection{Step 3: The Mordell--Weil sieve}\label{subsec:step3}
The Mordell--Weil sieve is a powerful tool for obtaining information about the rational points of a curve of genus greater than $1$. It first appears in Scharaschkin's Phd thesis \cite{Scharaschkin99} as a strategy to prove that a curve has no rational points.  
However, it can also be used in conjunction with methods such as classical or quadratic Chabauty to determine the set of rational points when we know that this is non-empty. See for instance \cite{Flynn_Hasse, Poonen_Schaefer_Stoll, Bruin_Stoll_existence, BS10} for successful sieving computations. Here we apply to the curve $X$ of \S \ref{subsec:step2} the technique of \cite[Sections 5-7]{BBM17} and the \texttt{Magma} implementation thereof available at \cite{QCMagma}; see \cite{RationalPts}.

We retain the notation of \S \ref{subsec:step2}. Let $I$ be a subset of $\{1,2,3\}$, and let $M=\prod_{j\in I} p_j^{n_j}$. In \S \ref{subsec:step2}, we reduced the problem of determining $X(\Q)$ to that of showing that some subset $C_M$ of the quotient $J(\Q)/MJ(\Q)$ does not contain the image of a point in $X(\Q)$, under the composition of the embedding $\iota$ with the canonical quotient map $\pi\colon J(\Q)\to J(\Q)/MJ(\Q)$. More generally, by applying the Chinese remainder theorem, we can work with $M =M^{\prime}\prod_{j\in I} p_j^{n_j} $ for some integer $M^{\prime}$ coprime to $p_j$ for all $j\in I$.

Let $S$ be a finite set of primes of good reduction for $X$ and consider the commutative diagram
\[\begin{tikzcd}
    X(\Q) \ar[r,"\pi \circ \iota"] \ar[d] & J(\Q)/MJ(\Q) \ar[d,"\alpha_S"] \\
    \displaystyle\prod_{\ell \in S}X(\F_{\ell}) \ar[r,"\beta_S"] & \displaystyle\prod_{\ell \in S}J(\F_{\ell})/MJ(\F_{\ell}),
\end{tikzcd}\]
where the vertical and bottom maps are the natural ones. 
Because of the commutativity of the diagram, we succeed in determining $X(\Q)$ if we can find a set $S$ for which  
\begin{equation}
\label{emptyness}
 \alpha_S(C_M) \cap \beta_S \left(\prod_{\ell \in S} X(\F_{\ell})\right) = \emptyset.
\end{equation}
We use the repository \cite{QCMagma} to choose a set $S$ of primes in such a way that \eqref{emptyness} has some likelihood of holding. 
The strategy is to pick primes $\ell$ for which the order of  $J(\F_{\ell})/MJ(\F_{\ell})$ is large relative to the number of points in $X(\F_{\ell})$ (or relative to its Hasse--Weil approximation $\ell +1 $). In particular, we pick primes $\ell\leq 10^{4}$ for which
\begin{equation*}
   \frac{ \# J(\F_{\ell})/MJ(\F_{\ell})}{ \ell + 1}> 2.
\end{equation*}
\begin{rmk}\label{rmk:choosing_qc_primes}
We could (but did not) apply a similar strategy in the choice of the primes $p_1,p_2,p_3$. Namely, we could restrict to primes $p$ for which, for some integer $n$, the ratio $\frac{ \# J(\F_{\ell})/p^n J(\F_{\ell})}{ \ell + 1}$ is large, for at least one choice of $\ell \leq 10^4$. We refer the reader to \cite[Section 7]{BBM17} for a discussion on how to simultaneously make optimal choices for the set $S$ and the integer $M$.
\end{rmk}

Our cosets $C_M$ are naturally partitioned into $\#\Omega_1 = \#\Omega_2= \#\Omega_3$ subsets (as explained in \S \ref{subsec:step2}). We run a separate sieve for each such subset. 

For $M^{\prime}\in [1,2,4]$, we do the following:
\begin{enumerate}
\item Let $S$ be a suitable set of primes (in the sense above) for $M^{\prime}p_1^4p_2^4p_3^4$.
\item For $I\in [\{1,2\},\{1,3\}, \{2,3\}, \{1,2,3\}]$, run the sieve with respect to $M=M^{\prime} \prod_{j\in I} p_j^4$ and the set $S$. If \eqref{emptyness} holds for some $I$, stop.
\end{enumerate}

Of course, even if the sieve does not succeed in eliminating the whole of $C_M$, we should not discard the information on which cosets are sieved out when changing $M$. 

If $J(\Q)_{\tors}$ is non-trivial, we may also try to take $M^{\prime}$ to be a divisor of $\#J(\Q)_{\tors}$.

For our database, the choices of $n_j = 4$ for each $j\in\{1,2,3\}$ and $M^{\prime}\in \{1,2,4\}\cup \{\text{divisors of }\#J(\Q)_{\tors}\}$ were successful for every curve. If one were to apply the technique to one specific curve (rather than to a database), it might be advisable to make more ad hoc choices. 

\subsection{Curves with no known rational points}\label{subsec:pointless}
In \S \ref{subsec:step2}--\ref{subsec:step3}, we explained our strategy for determining $X(\Q)$ when $X(\Q)_{\known} \neq \emptyset$. If $X(\Q)_{\known} = \emptyset$, we skip the quadratic Chabauty computations and directly apply a Mordell--Weil sieve to prove that $X(\Q) = \emptyset$. To this end, we implemented \cite{RationalPts} a simple sieve that uses only one good prime $\ell$ and the fact that $X$ is bielliptic (cf.\ \cite[Example 8.3]{siksek_sieve}). Let  $\varphi = (\varphi_1,\varphi_2)$. Then we have a commutative diagram
\begin{equation*}
\begin{tikzcd}
    X(\Q) \ar[r,"\varphi"] \ar[d, "\red_{\ell}"] & E_1(\Q)\times E_2(\Q)\ar[d, "\red_{\ell}"] \\
   X(\F_{\ell}) \ar[r,"\varphi"] & E_1(\F_{\ell})\times E_2(\F_{\ell}),
\end{tikzcd}
\end{equation*}
where $\red_{\ell}$ denotes reduction modulo $\ell$. 
If $\varphi(X(\F_{\ell}))\cap \red_{\ell}(E_1(\Q)\times E_2(\Q))$ is the empty set, we are done.

Our implementation relies on functions in \cite{BS10_code} for constructing explicit maps  $E_i(\F_p)\xrightarrow{\sim} A_i$, where $A_i$ is an abstract abelian group. 

We applied this to two curves in $\Delta$: the ones with LMFDB labels \href{http://www.lmfdb.org/Genus2Curve/Q/473256/a/946512/1}{473256.a.946512.1} and \href{http://www.lmfdb.org/Genus2Curve/Q/826672/a/826672/1}{826672.a.826672.1}. We used the prime $\ell = 331$ and $\ell = 181$, respectively. These two curves are examples of violation of the Hasse principle.

\begin{rmk}
We could have replaced $J$ with $E_1\times E_2$ also in the commutative diagram of \S \ref{subsec:step3}. We chose not to do so, since the code \cite{QCMagma} was directly applicable. 
However, the fact that $X$ is bielliptic is implicitly used. Indeed, an important ingredient in \S \ref{subsec:step3} is the computation of the Mordell--Weil group $J(\Q)$. In the bielliptic case, the \texttt{Magma} implementation for this uses the isogeny $J\sim E_1\times E_2$.
\end{rmk}

\subsection{Sharp quadratic Chabauty computations}\label{subsec:sharp} For $6$ curves we succeeded in determining the set of rational points using quadratic Chabauty only (i.e.\ without performing \S \ref{subsec:step3}). This may be of some interest in the context of conjectures on sharpness of more general Chabauty--Kim sets (cf. \cite[Conjecture 3.1]{BDCKW}). 

The curves, together with the relevant primes used for quadratic Chabauty, are listed in the following table.
\begin{center}
\begin{tabular}{ |c|c|c|c|} 
 \hline
 LMFDB label & Bielliptic model & Prime(s)  & $\#X(\Q)$\\ 
 \hline
  \href{http://www.lmfdb.org/Genus2Curve/Q/99856/b/99856/1}{99856.b.99856.1} & $y^2 = x^6 + 22x^4 - 19x^2 + 4$ & $p_1 = 3$ & 8 \\
 \hline
 \href{https://www.lmfdb.org/Genus2Curve/Q/322624/b/322624/1}{322624.b.322624.1} & $y^2 = x^6 - 2x^4 - 7x^2 + 4$ &  $p_1 = 3$ & 8\\ 
 \hline
 \href{https://www.lmfdb.org/Genus2Curve/Q/614656/a/614656/1}{614656.a.614656.1} & $y^2 = x^6 - 83x^4 + 19x^2 - 1$ & $p_1 = 3$ & 6 \\ 
 \hline
 \href{https://www.lmfdb.org/Genus2Curve/Q/571536/a/571536/1}{571536.a.571536.1} & $y^2 = x^6 - 12x^4 + 36x^2 - 4$ & $p_1 = 5$ & 2  \\
 \hline
\href{https://www.lmfdb.org/Genus2Curve/Q/274576/a/274576/1}{274576.a.274576.1} & $y^2 = x^6 - 4x^4 - 4x^2 - 4$ & $p_1 = 3,p_2 = 7$ & 2 \\
  \hline
 \href{https://www.lmfdb.org/Genus2Curve/Q/489648/a/489648/1}{489648.a.489648.1} & $y^2 = -3x^6 + 4x^4 + 4x^2 - 4$ & $p_1 = 5$ & 4  \\
 \hline
\end{tabular}
\end{center}
In particular, for the first four listed curves, the set $A_{\text{extra},1}$ is empty (cf.\ \S \ref{subsec:step2}). For \href{https://www.lmfdb.org/Genus2Curve/Q/274576/a/274576/1}{274576.a.274576.1}, neither $A_{\text{extra},1}$, nor $A_{\text{extra},2}$ is empty; however, there exists no pair $(\omega_1,\omega_2)\in\Omega_1\times \Omega_2$ of compatible elements for which both $A_{\text{extra},1}$ and $A_{\text{extra},2}$ contain a point. 

Finally, for \href{https://www.lmfdb.org/Genus2Curve/Q/489648/a/489648/1}{489648.a.489648.1}, for each point $P_1$ in the (non-empty) $A_{\text{extra},1}$, we find that at least one of the coefficients $a_1,a_2$ in \eqref{eq:coeff_iota_P} has negative $p_1$-adic valuation, a contradiction to their being integers.

\bibliographystyle{amsalpha}
\bibliography{biblio}
\end{document}